\setlist{nolistsep, noitemsep}
\newtheorem{theorem}{Theorem}[section]
\newtheorem{lemma}[theorem]{Lemma}
\newtheorem{problem}[theorem]{Problem}
\newtheorem{proposition}[theorem]{Proposition}
\newtheorem{corollary}[theorem]{Corollary}
\newtheorem{conjecture}[theorem]{Conjecture}
\theoremstyle{definition}
\newtheorem{definition}[theorem]{Definition}
\newcommand{\meas}[1]{\ensuremath{\mu\left(#1\right)}}
\newcommand{\edgemult}[1]{\ensuremath{\left|#1\right|}}
\newcommand{\Prob}[1]{\ensuremath{%
    \mathbb P\left[#1\right]
  }}
\DeclareMathOperator{\mad}{mad}
\title{Fractional coloring with local demands and applications to degree-sequence bounds on the independence number}
\author{Tom Kelly%
  \thanks{
    School of Mathematics, Georgia Institute of Technology. 
    Email: \protect\href{mailto:tom.kelly@gatech.edu}{\protect\nolinkurl{tom.kelly@gatech.edu}}.
    Research
supported by the National Science Foundation under Grant No. DMS-224707
  }
  \and
  Luke Postle%
  \thanks{
    Department of Combinatorics and Optimization, University of Waterloo, Canada.
    Email: \protect\href{mailto:lpostle@uwaterloo.ca}{\protect\nolinkurl{lpostle@uwaterloo.ca}}.
    Partially supported by NSERC under Discovery Grant No.\ 2019-04304, the Ontario Early Researcher Awards program and the Canada Research Chairs program.
  }}
\begin{document}
\maketitle

\begin{abstract}
  In a fractional coloring, vertices of a graph are assigned measurable subsets of the real line and adjacent vertices receive disjoint subsets; the fractional chromatic number of a graph is at most $k$ if it has a fractional coloring in which each vertex receives a subset of $[0, 1]$ of measure at least $1/k$.  We introduce and develop the theory of ``fractional colorings with local demands'' wherein each vertex ``demands'' a certain amount of color that is determined by local parameters such as its degree or the clique number of its neighborhood.  This framework provides the natural setting in which to generalize degree-sequence type bounds on the independence number.  Indeed, by Linear Programming Duality, all of the problems we study have an equivalent formulation as a problem concerning weighted independence numbers, and they often imply new bounds on the independence number.  

  Our results and conjectures are inspired by many of the most classical results and important open problems concerning the independence number and the chromatic number, often simultaneously.  We conjecture a local strengthening of both Shearer's bound on the independence number of triangle-free graphs and the fractional relaxation of Molloy's recent bound on their chromatic number, as well as a longstanding problem of Ajtai et al.\ on the independence number of $K_r$-free graphs and the fractional relaxations of Reed's $\omega, \Delta, \chi$ Conjecture and the Total Coloring Conjecture.   We prove an approximate version of the first two, and we prove ``local demands'' versions of Vizing's Theorem and of some $\chi$-boundedness results.
\end{abstract}

\section{Introduction}\label{intro section}

Let $G$ be a graph.  A \textit{proper coloring} of $G$ is an assignment of colors to the vertices of $G$ such that adjacent vertices receive different colors, and the \textit{chromatic number} of $G$, denoted $\chi(G)$, is the fewest number of colors needed to properly color $G$.  An \textit{independent set} in $G$ is a subset of vertices of $G$ that are pairwise nonadjacent, and the \textit{independence number} of $G$, denoted $\alpha(G)$, is the size of a largest independent set in $G$.  These graph parameters are among the oldest and most well-studied in graph theory.  The \textit{fractional chromatic number} of $G$, denoted $\chi_f(G)$ and defined formally in Definition~\ref{fractional coloring definition}, lies between them in the following sense:

\begin{equation}\label{coloring parameters inequality}
  |V(G)|/\alpha(G) \leq \chi_f(G) \leq \chi(G).
\end{equation}

There are two popular streams of research concerning the fractional chromatic number.  On the one hand, we can sometimes prove better upper bounds for $\chi_f$ when the same bound is impossible or out of reach for $\chi$.  For example, Reed's Conjecture~\cite{R98} is known to hold for the fractional chromatic number (see~\cite{MR02}), Kilakos and Reed~\cite{KR93} proved the fractional relaxation of the Total Coloring Conjecture~\cite{B65, V68}, and Reed and Seymour~\cite{RS98} proved that the fractional chromatic number of graphs with no $K_{t + 1}$-minor is at most $2t$, a factor of two away from the bound famously conjectured by Hadwiger~\cite{H43}.  In fact, in the first paper on fractional coloring,  before the Four Color Theorem was proved~\cite{AH76}, Hilton, Rado, and Scott~\cite{HRS73} proved that planar graphs have fractional chromatic number strictly less than five.  See also~\cite{DH18}.

On the other hand, the problem of generalizing bounds on the independence number to the fractional chromatic number has received considerable attention.  Dvo\v{r}\'{a}k, Sereni, and Volec~\cite{DSV14} proved that triangle-free graphs of maximum degree three have fractional chromatic number at most $14/5$, generalizing a well-known result of Staton~\cite{S79} and resolving a conjecture of Heckman and Thomas~\cite{HT01}.  Heckman and Thomas~\cite{HT06} proved that triangle-free subcubic planar graphs on $n$ vertices have independence number at least $3n/8$, and they also conjectured that this result can be generalized to the fractional chromatic number.  See also~\cite{DM17},~\cite{DSV15}, and~\cite[Conjecture 4.3]{CvBdJdVKP18}.

In this paper we expand both of these streams in a novel way by developing the theory of ``fractional coloring with local demands''.  This framework provides both the natural setting in which to generalize degree-sequence type bounds on the independence number as well as new methods to prove new independence number bounds.  Moreover, ``local demands'' results are more robust versions of bounds on the fractional chromatic number, and they imply results about ``fractional coloring with local list sizes,'' which can be useful for precoloring extension problems in the fractional setting.
We begin with a more thorough introduction to fractional coloring and the local demands framework.  Our main results and conjectures are stated in Section~\ref{results section}.

\subsection{Fractional coloring}\label{fractional coloring intro}

Before introducing the ``local demands'' theory, we need some definitions.  The fractional chromatic number can be defined from several different perspectives, each with its own advantages.  We prefer the following definition because of its similarity to proper coloring.  Here, and throughout the paper, we use $\mu$ to denote the Lebesgue measure on the real numbers.
\begin{definition}\label{fractional coloring definition}
  Let $G$ be a graph.
  \begin{itemize}
  \item A \textit{fractional coloring} of $G$ is a function $\phi$ with domain $V(G)$ such that for each $v\in V(G)$, the image $\phi(v)$ is a measurable subset of $\mathbb R$ such that for each $uv\in E(G)$, we have $\phi(u)\cap \phi(v) = \varnothing$.
  \item The \textit{fractional chromatic number} of $G$, denoted $\chi_f(G)$, is the infimum over all positive real numbers $k$ such that $G$ has a fractional coloring such that for every $v\in V(G)$, we have $\phi(v) \subseteq [0, k]$ and $\meas{\phi(v)} \geq 1$.
  \end{itemize}
\end{definition}

Each way to define the fractional chromatic number has a natural, more general ``local analogue'' which we can use in the ``local demands'' setting, and Dvo\v{r}\'{a}k, Sereni, and Volec \cite[Theorem 2.1]{DSV14} proved that these local analogues are also equivalent, as we now discuss.  We primarily use the following notation of Dvo\v{r}\'{a}k, Sereni, and Volec~\cite{DSV14}, the local analogue of Definition~\ref{fractional coloring definition}.
\begin{definition}\label{f-coloring definition}
  Let $G$ be a graph.
  \begin{itemize}
  \item A \textit{demand function} for $G$ is a function $f : V(G) \rightarrow [0, 1]\cap\mathbb Q$.
  \item If $f$ is a demand function for $G$, an \textit{$f$-coloring} of $G$ is a fractional coloring $\phi$ such that for every $v\in V(G)$, we have $\phi(v)\subseteq [0, 1]$ and $\meas{\phi(v)} \geq f(v)$.
  \end{itemize}
\end{definition}

Note that $\chi_f(G) \leq k$ if and only if $G$ admits an $f$-coloring when $f(v) = 1/k$ for each $v\in V(G)$.  In the local demands theory, rather than upper bounding the fractional chromatic number, we seek to prove the existence of $f$-colorings for certain natural demand functions $f$ that are not constant.  Before discussing these demand functions, let us introduce some other notions that are equivalent to fractional coloring.

Dvo\v{r}\'{a}k, Sereni, and Volec \cite{DSV14} also defined a discrete analogue of an $f$-coloring, as follows.

\begin{definition}\label{common denominator definition}
  Let $G$ be a graph with demand function $f$.
  \begin{itemize}
  \item An integer $N$ is a \textit{common denominator} for $f$ if $N\cdot f(v)$ is an integer for every $v\in V(G)$.
  \item If $N$ is a common denominator for $f$, then an \textit{($f, N)$-coloring} of $G$ is an assignment $\psi$ of subsets of $\{1, \dots, N\}$ to the vertices of $G$ such that for every $uv\in E(G)$, we have $\psi(u)\cap \psi(v) = \varnothing$ and for every $v\in V(G)$, we have $|\psi(v)| \geq N\cdot f(v)$.
  \end{itemize}
\end{definition}

The concept of an $(f, N)$-coloring is the local analogue of what is commonly referred to as an \textit{$(a : b)$-coloring}, which is an $(f, N)$-coloring in which $a = N$ and $f(v)\cdot N = b$ for each $v\in V(G)$.  Note that if $G$ has an $(f, N)$-coloring for some common denominator $N$, then $G$ has an $f$-coloring.

Fractional coloring is also interesting from the perspective of polyhedral combinatorics.  The \textit{stable set polytope} of a graph $G$ is the convex hull of the incidence vectors of the independent sets of $G$ in $\mathbb R^{|V(G)|}$.  The fractional chromatic number is often defined as the optimum solution to a certain Linear Program, and the solution to this LP is at most $k$ if and only if the vector in $\mathbb R^{|V(G)|}$ in which each entry is $1/k$ is in the stable set polytope of $G$.  A graph $G$ has an $(f, N)$-coloring for some common denominator $N$ if and only if the vector of demands $(f(v) : v \in V(G))$ is in the stable set polytope of $G$.  Hence, the local analogue of this LP-based definition of $\chi_f$ involves determining if the vector of demands is in the stable set polytope.  Note also that the vector of demands is in the stable set polytope of $G$ if and only if there exists a probability distribution on the independent sets of $G$ such that if $I$ is selected according to this distribution, then $\Prob{v\in I} \geq f(v)$ for each $v\in V(G)$.

The lower bound on $\chi_f$ in~\eqref{coloring parameters inequality} actually holds more generally, as follows.  If a graph $G$ has an $f$-coloring, then for any nonnegative \textit{weight function} $w : V(G)\rightarrow \mathbb R_+$, there is an independent set $I\subseteq V(G)$ such that $\sum_{v\in I}w(v) \geq \sum_{v\in V(G)}w(v)f(v)$.  The converse also holds, as Dvo\v{r}\'{a}k, Sereni, and Volec~\cite[Theorem~2.1]{DSV14} demonstrated using LP-duality that $G$ has this latter property if and only if the vector of demands is in the stable set polytope.  Hence we have the following equivalent ways to view fractional coloring.

\begin{proposition}\label{equivalent definitions}
  Let $G$ be a graph with demand function $f$.  The following are equivalent.
  \begin{enumerate}[(a)]
  \item The graph $G$ has an $f$-coloring.
  \item There exists a common denominator $N$ for $f$ such that $G$ has an $(f, N)$-coloring.
  \item\label{equiv-definitions-prob-dist} There exists a probability distribution on the independent sets of $G$ such that if $I$ is selected according to the distribution, then $\Prob{v\in I} \geq f(v)$ for each $v\in V(G)$.
  \item The vector of demands $(f(v) : v \in V(G))$ is in the stable set polytope of $G$.
  \item\label{weighted independence number} For every nonnegative weight function $w : V(G) \rightarrow \mathbb R_+$, the graph $G$ contains an independent set $I$ such that $\sum_{v\in I}w(v) \geq \sum_{v\in V(G)}w(v)f(v)$.
  \end{enumerate}
\end{proposition}

Thus by~\ref{weighted independence number}, our fractional coloring results in this paper imply bounds on the independence number, almost all of which are new, and in some cases, the more general formulation as a fractional coloring problem is essential to the proof.  In fact, if a graph $G$ has an $f$-coloring, then for any $k$, there is an induced $k$-colorable subgraph in $G$ with at least $\sum_{v\in V(G)} 1 - (1 - f(v))^k$ vertices, and consequently every graph $G$ contains an induced $k$-colorable subgraph with at least $(1 - ((\chi_f(G) - 1)/\chi_f(G))^k)|V(G)|$ vertices.  (To prove this fact, consider choosing $x_1, \dots, x_k \in [0, 1]$ independently and uniformly at random; if $\phi$ is an $f$-coloring of $G$, then the set $\{v \in V(G) : \phi(v) \cap \{x_1, \dots, x_k\} \neq \varnothing\}$ induces a $k$-colorable subgraph in $G$ and has size at least $\sum_{v\in V(G)} 1 - (1 - f(v))^k$ in expectation).

Researchers have previously considered fractional coloring with respect to non-constant demand functions in order to obtain upper bounds on $\chi_f$ in one of at least two ways, as follows.
If $f$ and $g$ are demand functions for a graph $G$ such that $G$ has an $f$-coloring and a $g$-coloring, and $\lambda \in [0, 1]$, then $G$ has a $(\lambda \cdot f + (1 - \lambda)\cdot g)$-coloring.  In particular, if $\lambda f(v) + (1 - \lambda)g(v) \geq 1/k$ for each $v\in V(G)$, then $\chi_f(G) \leq k$.  This technique is common in fractional coloring; it is used in~\cite{DSV15, EK13, KKS10, KKK11}, and it is used implicitly in Kilakos and Reed's~\cite{KR93} proof of the fractional relaxation of the Total Coloring Conjecture.  In Section~\ref{tri-free section}, we use this technique to show that our Conjecture~\ref{local tri-free}, if true, implies a recent conjecture of Cames van Batenburg et al.~\cite[Conjecture 4.3]{CvBdJdVKP18} on the fractional chromatic number of triangle-free graphs.
Another way that considering non-constant demand functions can be useful for bounding $\chi_f$ is that these demand functions can be used to prove a statement stronger than simply a bound on $\chi_f$, which in turn can make using induction easier.  Dvo\v{r}\'{a}k, Sereni, and Volec~\cite{DSV14} first used this idea in their proof that triangle-free subcubic graphs have fractional chromatic number at most $14/5$, and they introduced Definitions~\ref{f-coloring definition} and~\ref{common denominator definition} for this purpose.  These results contrast with local demands in that the demand functions being considered are technical by nature and only interesting for their application to bounding $\chi_f$.

We also introduce list coloring in the fractional setting, as follows.
\begin{definition}\label{fractional list definition}
  Let $G$ be a graph.
  \begin{itemize}
  \item If $L$ is a function with domain $V(G)$ such that $L(v)$ is
a measurable subset of $\mathbb R$ for each $v\in V(G)$,
    then $L$ is a \textit{fractional list-assignment} for $G$.
  \item A \textit{fractional $L$-coloring} is a fractional coloring $\phi$ of $G$ such that every vertex $v\in V(G)$ satisfies $\phi(v)\subseteq L(v)$ and $\meas{\phi(v)} \geq 1$.
  \end{itemize}
\end{definition}

There is a natural way to define a fractional list chromatic number using Definition~\ref{fractional list definition}; however, this parameter would simply equal the fractional chromatic number, since the following proposition implies that if $\chi_f(G) \leq k$, then $G$ has a fractional $L$-coloring for any fractional list-assignment $L$ satisfying $\meas{L(v)} \geq k$ for each $v\in V(G)$.
\begin{proposition}\label{local fractional list implication}
  Let $G$ be a graph with demand function $f$ such that $G$ has an $f$-coloring.  If $L$ is a fractional list-assignment for $G$ such that each $v\in V(G)$ satisfies $\meas{L(v)} \geq f(v)^{-1}$, then $G$ has a fractional $L$-coloring.
\end{proposition}

We prove a more general version (Lemma~\ref{list coloring lemma}) of Proposition~\ref{local fractional list implication} in Section~\ref{fractional list section}.

Although the fractional list chromatic number is no different from the fractional chromatic number, it is interesting to study ``fractional coloring with local list sizes'', in which we seek to prove the existence of fractional $L$-colorings when the measure of each vertex's list is determined by its local structure.  Davies et al.~\cite[Theorem~2]{DdJdVKP18} recently considered a special case of such a problem for triangle-free graphs.  Such results are useful for extending fractional precolorings, as demonstrated by Corollary~\ref{weak list TCC}.  By Proposition~\ref{local fractional list implication}, these problems can potentially be proved in a stronger form in the setting of local demands.  
  
\subsection{Local Demands}
The aim of this paper is to introduce and explore fractional colorings of graphs with respect to demand functions in which the demand for each vertex depends naturally on local parameters, such as its degree.
Besides being independently interesting, there are two main reasons to consider these ``local demands'' problems.

\begin{enumerate}[(1)]
\item Using local demands is the natural way in which to generalize degree-sequence type bounds on the independence number to fractional coloring, via Proposition~\ref{equivalent definitions} (e).  In particular, if we know that for some class of graphs $\mathcal G$, every $G\in\mathcal G$ satisfies $\alpha(G) \geq \sum_{v\in V(G)}f_G(v)$ for some demand function $f_G$, then we seek to determine if every $G\in\mathcal G$ has an $f_G$-coloring.
\item Local demands results provide more robust versions of bounds on the fractional chromatic number, since $\chi_f(G) \leq \max_{v\in V(G)} f(v)^{-1}$ if $G$ has an $f$-coloring -- if $\mathcal G$ is a class of graphs such that every $G\in\mathcal G$ satisfies $\chi_f(G) \leq g(G)$ for some function $g$, then we seek to determine if $G$ has an $f$-coloring where $f(v) = g(G[N[v]])$, where $N[v]$ is the closed neighborhood of $v$.  Moreover, as demonstrated by Proposition~\ref{local fractional list implication}, local demands results have implications in the setting of fractional coloring with local list sizes.
\end{enumerate}

As these reasons demonstrate, this new paradigm parallels classical research on bounds for both the chromatic number and the independence number of graphs.  Many well-known problems and results have natural generalizations in the local demands setting; we conjecture the following:
\begin{itemize}
\item Conjecture~\ref{local tri-free}, the local fractional strengthening of both Shearer's~\cite{Sh83, Sh91} bounds on the independence number of triangle-free graphs and the fractional relaxation of Molloy's~\cite{M17} bound on their chromatic number, which is closely related to the asymptotics of the off-diagonal Ramsey number $R(3, k)$~\cite{BK13, FPGM13},
\item Conjecture~\ref{local K_r-free}, the local fractional strengthening of a longstanding problem of Ajtai, Erd\H os, Koml\'{o}s, and Szemer\'{e}di~\cite{AEKS81} on the independence number of $K_r$-free graphs,
\item Conjecture~\ref{local reeds conj}, the local fractional version of Reed's $\omega, \Delta, \chi$ Conjecture~\cite{R98}, and
\item Conjecture~\ref{local fractional TCC}, the local fractional version of the Total Coloring Conjecture~\cite{B65, V68}.
\end{itemize}
\vspace{\baselineskip}
In this paper, we make the following contributions toward developing the theory of local demands.
\begin{itemize}
\item We generalize the result of Alon, Tuza, and Voigt~\cite{ATV97} that the fractional list chromatic number equals the fractional chromatic number to the setting of local demands in Section~\ref{ATV section} (Theorem~\ref{local multiplicative coloring}).
\item We provide three different proofs of the ``local fractional greedy bound'' (Theorem~\ref{caro wei theorem}), which simultaneously implies both the Caro-Wei Theorem~\cite{C79, W81} and the fractional relaxation of the ``greedy bound'' on the chromatic number in Section~\ref{caro wei proof section}.
\item We make progress towards both our local fractional Shearer/Molloy Conjecture (Theorem~\ref{approximate tri-free}) and our local fractional Ajtai et al.\ Conjecture (Theorem~\ref{approximate Kr}) as well as a local fractional version of a recent result of Bonamy et al.~\cite{BKNP18} (Theorem~\ref{local low omega}) on the chromatic number of graphs without large cliques in Section~\ref{tri-free section}.
\item We confirm the local fractional Reed's Conjecture for perfect graphs (Theorem~\ref{local frac perfect}), which strengthens a recent result of Brause et al.~\cite{CRRS16}.  We actually prove a stronger local fractional result (Theorem~\ref{chi-bounded meta-theorem}) about $\chi$-bounded classes of graphs with a linear $\chi$-binding function in Section~\ref{chi-bounded section}.
\item We prove the local fractional version of Vizing's Theorem~\cite{V64} (Theorem~\ref{fractional generalized vizings}) in Section~\ref{edge coloring section}.
\end{itemize}
We give precise formulations of these problems and further discussion in Section~\ref{results section}.

\section{Results and conjectures}\label{results section}

\subsection{Greedy coloring}

As mentioned, fractional coloring with local demands is the natural setting to which to generalize degree-sequence type bounds on the independence number.  There are numerous such bounds on the independence number, for example~\cite{CRRS16, CT91, G83, HM19, HR11, HS01, HS06, S94, Sh91, Th99}, and the concept of local demands enables us to generalize many of these to the setting of fractional coloring.  The archetypal example comes from the famous Caro-Wei Theorem~\cite{C79, W81}, which states that every graph $G$ satisfies $\alpha(G) \geq \sum_{v\in V(G)}{1}/({d(v) + 1})$, where $d(v)$ is the degree of $v$.   This result actually strengthens Tur\'{a}n's Theorem~\cite{T41}, which is equivalent to the following.  If $G$ is a graph on $n$ vertices with average degree $d$, then $\alpha(G) \geq n/(d + 1)$.  Tur\'{a}n's Theorem and the Caro-Wei Theorem can be generalized to fractional coloring using local demands, as follows.

\begin{theorem}[Local Fractional Greedy Bound]\label{caro wei theorem}
  If $G$ is a graph with demand function $f$ such that $f(v) \leq 1/ (d(v) + 1)$ for each $v\in V(G)$, then $G$ has an $f$-coloring.
\end{theorem}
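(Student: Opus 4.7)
The plan is to construct an $f$-coloring of $G$ directly by generalizing the random-ordering proof of the Caro--Wei theorem, and then converting the resulting distribution on independent sets into an explicit measurable coloring $\phi$.

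First I would take a uniformly random linear ordering $\pi$ of $V(G)$ and let $I_\pi\subseteq V(G)$ be the set of vertices $v$ that precede every neighbor of $v$ under $\pi$. Then $I_\pi$ is independent in $G$, and since $v$ is equally likely to occupy any of the $d(v)+1$ relative positions among $N[v]$, one has $\Pr[v\in I_\pi]=1/(d(v)+1)\geq f(v)$.

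Next I would turn this random independent set into a fractional coloring. Writing $n=|V(G)|$, I partition $[0,1]$ into $n!$ intervals $J_\pi$ of length $1/n!$, one for each permutation $\pi$ of $V(G)$, and set $\phi(v):=\bigcup_{\pi\,:\,v\in I_\pi} J_\pi$. By construction $\phi(v)\subseteq[0,1]$ is measurable with $\mu(\phi(v))=\Pr[v\in I_\pi]\geq f(v)$, and for each edge $uv\in E(G)$ the intersection $\phi(u)\cap\phi(v)$ is empty, because no $I_\pi$ can contain both endpoints of an edge. Hence $\phi$ is the desired $f$-coloring.

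There is no real obstacle in this argument; the only substantive step is the probability computation underpinning Caro--Wei. One could equivalently invoke the LP-duality reformulation from~\cite{DSV14} cited in the excerpt and reduce the claim to showing that for every $w:V(G)\to\mathbb{R}_+$ there is an independent set $I$ with $\sum_{v\in I}w(v)\geq\sum_v f(v)w(v)$; taking expectations in the same random-ordering argument gives $\mathbb{E}\bigl[\sum_{v\in I_\pi}w(v)\bigr]=\sum_v w(v)/(d(v)+1)\geq\sum_v w(v)f(v)$, so a witnessing $I$ always exists. I would prefer the direct construction, however, since it yields an explicit coloring and should serve as a template for the finer local-demands strengthenings alluded to in the abstract.
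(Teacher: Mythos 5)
Your proposal is correct and coincides with the paper's second proof of this theorem (the Alon--Spencer random-ordering argument); the paper simply invokes Theorem~\ref{equivalent definitions} and Remark~\ref{other equivalent definitions remark} to pass from the distribution on independent sets to an $f$-coloring, whereas you spell out that conversion explicitly via the partition of $[0,1]$ into intervals indexed by permutations. Both steps are sound.
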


We remark that the dual formulation of Theorem~\ref{caro wei theorem} (using Proposition~\ref{equivalent definitions}~\ref{weighted independence number}) was proved in~\cite{STY03}.

As we see in Section~\ref{caro wei proof section}, there are several different ways to prove this theorem.  All of these proofs are reminiscent of proofs of the so-called ``greedy bound'' on the chromatic number, which states that every graph $G$ satisfies $\chi(G) \leq \Delta(G) + 1$, where $\Delta(G)$ is the maximum degree of $G$.  This is no coincidence, as Theorem~\ref{caro wei theorem} also implies the fractional relaxation of greedy bound, that is that every graph $G$ satisfies $\chi_f(G) \leq \Delta(G) + 1$.  

\subsection{Triangle-free graphs}

In 1980, Ajtai, Koml\'{o}s, and Szemer\'{e}di~\cite{AKS80} famously proved that every triangle-free graph $G$ on $n$ vertices with average degree $d$ has independence number at least $0.01(n/d)\log d$.  Shearer~\cite{Sh83} later improved the leading constant to $(1 - o(1))$, and in~\cite{Sh91}, he generalized this by proving there is an independent set of size at least $\sum_{v\in V(G)}(1 - o(1))\log d(v)/d(v)$.  (We use $\log$ to denote the natural logarithm.) In the 90s, Johansson~\cite{J96-tri} proved the related result that every triangle-free graph $G$ satisfies $\chi(G) = O(\Delta(G)/\log \Delta(G))$, and recently, Molloy~\cite{M17} improved the leading constant to $(1 + o(1))$.  We conjecture the local demands version of both Shearer's result and the fractional relaxation of Molloy's result, as follows.

\begin{conjecture}[Local Fractional Shearer/Molloy]\label{local tri-free}
  If $G$ is a triangle-free graph with demand function $f$ such that $f(v) \leq (1 - o(1))\log d(v) / d(v)$ for each $v\in V(G)$, then $G$ has an $f$-coloring.
\end{conjecture}

It is possible that the bounds of Shearer~\cite{Sh83, Sh91} and Molloy~\cite{M17} for triangle-free graphs can be improved.  A result of Bollob\' as~\cite{B81} implies that there exist triangle-free graphs on $n$ vertices and maximum degree $d$ for large $n$ and $d$ such that $\alpha(G) \leq 2n\log d / d$, and no asymptotically better upper bound is known.  Note that these graphs have chromatic number at least $d / (2\log d)$, which is the best known lower bound for large $d$.  Determining the best possible upper bounds for $|V(G)|/\alpha(G)$, $\chi_f(G)$, and $\chi(G)$ in terms of the average or maximum degree of $G$ when $G$ is a triangle-free graph is an important open problem.  
If true, Conjecture~\ref{local tri-free} simultaneously implies both Shearer's~\cite{Sh91} bound on the independence number and the fractional relaxation of Molloy's~\cite{M17} bound on the chromatic number of triangle-free graphs, which are the current best known upper bounds for $|V(G)|/\alpha(G)$ in terms of the average degree and for $\chi_f(G)$ in terms of the maximum degree, respectively.  Moreover, proving Conjecture~\ref{local tri-free} necessitates a new approach, which may provide insight into the barrier to improving the bounds for $\alpha, \chi_f$, and $\chi$.  Improving any of these bounds would be a major breakthrough.

The \textit{Ramsey number} $R(\ell, k)$ is the smallest $n$ such that every graph on at least $n$ vertices contains a clique of size $\ell$ or an independent set of size $k$.  Using either result of Shearer~\cite{Sh83, Sh91} or Molloy's~\cite{M17} result, it is straightforward to show that $R(3, k) \leq (1 + o(1))k^2/\log k$.  An improvement to the leading constant in any of these results would also improve this bound on the Ramsey number $R(3, k)$.  In 1995, Kim~\cite{K95ramsey} proved that $R(3, k) = \Omega(k^2/\log k)$, and in 2013, Fiz Pontiveros, Griffiths, and Morris~\cite{FPGM13} and independently Bohman and Keevash~\cite{BK13} proved a lower bound of $(1/4 - o(1))k^2/\log k$ for $R(3, k)$.  Determining the leading constant of the Ramsey number $R(3, k)$ asymptotically is also a major open problem.  The best known upper bound on $R(3, k)$ is equivalent to the following fact: if $G$ is a triangle-free graph on $n$ vertices, then $n / \alpha(G) \leq (\sqrt 2 + o(1))\sqrt{n / \log n}$.  Cames van Batenburg et al.~\cite[Conjecture 4.3]{CvBdJdVKP18} recently conjectured that this bound on $n/\alpha$ can be generalized to fractional coloring, and we show in Section~\ref{tri-free section} that Conjecture~\ref{local tri-free} is a natural strengthening of their conjecture.

In Section~\ref{tri-free section}, we also make progress towards Conjecture~\ref{local tri-free} by proving the following result.

\begin{theorem}\label{approximate tri-free}
  If $G$ is a triangle-free graph with demand function $f$ such that
  \begin{equation*}
    f(v) \leq (2e - o(1))^{-1}\frac{d(v)\log\log d(v)}{\log d(v)}
  \end{equation*}
  for each $v\in V(G)$, then $G$ has an $f$-coloring.
\end{theorem}

By Proposition~\ref{local fractional list implication}, Theorem~\ref{approximate tri-free} implies that if $G$ is a graph of sufficently large minimum degree with fractional list-assignment $L$ satisfying $\meas{L(v)} \geq \log d(v) / (6 d(v)\log\log d(v))$ for each $v\in V(G)$, then $G$ has a fractional $L$-coloring.  Interestingly, this result does not hold in the ordinary list coloring setting as long as the maximum degree is not bounded by a function of the minimum degree, as shown by Davies et al.~\cite[Proposition~11]{DdJdVKP18}.  Davies et al.~\cite[Theorem~2]{DdJdVKP18} also proved that if $G$ is a triangle-free graph with fractional list-assignment $L$ such that $L(v) = [0, (1 + o(1))d(v)/\log d(v)]$, then $G$ has a fractional $L$-coloring.  It would be interesting to prove the stronger result that every triangle-free graph $G$ with fractional list-assignment $L$ satisfying $\meas{L(v)} \geq (1 + o(1))d(v)/\log d(v)$ for each $v\in V(G)$ has a fractional $L$-coloring, which would follow from Conjecture~\ref{local tri-free}.

\subsection{$K_r$-free graphs}

Ajtai, Erd\H os, Koml\'{o}s, and Szemer\'{e}di~\cite{AEKS81} proved that for some constant $c$, if $G$ is a $K_r$-free graph of average degree $d$, then $|V(G)|/\alpha(G) \leq cd / \log ((\log d) / r)$.  Shearer~\cite{Sh95} later proved that for any fixed $r$, if $G$ is a $K_r$-free graph such that $\Delta(G) \leq \Delta$, then $|V(G)| / \alpha(G) = O\left(\Delta\log\log\Delta/\log\Delta\right)$.  Since any graph $G$ has an induced subgraph on at least $|V(G)|/2$ vertices of maximum degree at most twice the average degree of $G$, Shearer's result actually holds with $\Delta(G)$ replaced by the average degree and thus improves the bound of Ajtai et al.~\cite{AEKS81} significantly.  Johansson~\cite{J96-Kr} generalized Shearer's bound for the chromatic number, and Molloy~\cite{M17} recently found a shorter proof of Johansson's result.  Ajtai et al.\ suggested that the best upper bound for any fixed $r$ may actually be of the form $cd / \log d$, that is the $\log\log\Delta$ term in Shearer's result is not needed.  This bound is currently widely believed to hold.  In the same vein as the Caro-Wei Theorem and Shearer's~\cite{Sh91} result, It is natural to ask if a degree-sequence version of this bound holds and if it holds more generally for fractional coloring.  Thus, we conjecture the following.

\begin{conjecture}[Local Fractional Ajtai-Erd\H os-Koml\'{o}s-Szemer\'{e}di]\label{local K_r-free}
  For every $r$, there exists some constant $c$ such that the following holds.  If $G$ is a $K_r$-free graph with demand function $f$ such that $f(v) \leq c\log d(v) / d(v)$ for each $v\in V(G)$, then $G$ has an $f$-coloring.
\end{conjecture}

In Section~\ref{tri-free section}, we make progress towards this conjecture by proving the following.
\begin{theorem}\label{approximate Kr}
  For every $r$, there exists some constant $c$ such that the following holds.  If $G$ is a $K_r$-free graph with demand function $f$ such that
  \begin{equation*}
    f(v) \leq \frac{c\log d(v)}{d(v)(\log\log d(v))^2}
  \end{equation*}
  for each $v\in V(G)$, then $G$ has an $f$-coloring.
\end{theorem}

Theorem~\ref{approximate Kr} implies that for every $r$, there exists $c$ such that if $G$ is a $K_r$-free graph, then $\alpha(G) \geq \sum_{v\in V(G)}c\log d(v) / (d(v)(\log\log d(v))^2)$.  No degree-sequence bound was previously known, and this bound improves Shearer's~\cite{Sh95} for graphs that are far from regular.  This bound is also better than the bound of Ajtai et al.~\cite{AEKS81}.  We do not know how to obtain this degree-sequence bound on the independence number of $K_r$-free graphs without considering fractional coloring.  

\subsection{Graphs without large cliques}

We now consider $K_r$-free graphs where $r$ is no longer fixed.  We let $\omega(G)$ denote the size of a largest clique in $G$, and for each vertex $v\in V(G)$, we let $\omega(v) = \omega(G[N(v)])$.
Recently, Bonamy, Kelly, Nelson, and Postle~\cite{BKNP18} proved that if $\Delta$ is sufficiently large and $G$ is a graph with $\Delta(G) \leq \Delta$ and $\omega(G) \leq \Delta^{1/(72c)^2}$, then $\chi(G) \leq \Delta / c$.  We prove the following local demands analogue of this result in Section~\ref{tri-free section}.

\begin{theorem}\label{local low omega}
  If $G$ is a graph with demand function $f$ and $c \geq e^6$ such that each vertex $v\in V(G)$ satisfies $\omega(v) \leq d(v)^{1/(432c\log c)^2}$ and $f(v) \leq c/d(v)$, then $G$ has an $f$-coloring.
\end{theorem}

We remark that it was not previously known and is nontrivial to show that Theorem~\ref{caro wei theorem} can be strengthened for triangle-free graphs of large minimum degree -- that is, if $G$ is a triangle-free graph with demand function $f$ such that each $v\in V(G)$ satisfies $f(v) \leq (1 + \varepsilon - o(1))/(d(v) + 1)$ for some $\varepsilon > 0$, then $G$ has an $f$-coloring.  This fact follows from Theorem~\ref{local low omega} with a considerably more relaxed assumption concerning the cliques in $G$.  It would be interesting to improve Theorem~\ref{local low omega} by showing that every graph has an $f$-coloring where $f(v) \leq c/d(v)$ if $\omega(v) \leq d(v)^{1/\Omega(c)}$ for each vertex $v$.  This result would be best possible by the following Ramsey-theoretic result of Spencer~\cite{S77}: there exist graphs on $n$ vertices for large $n$ with independence number $c$ (and thus fractional chromatic number at least $n/c$) such that every clique has size at most $n^{\frac{2}{c+2} + o(1)}$.

We actually derive Theorems~\ref{approximate tri-free},~\ref{approximate Kr}, and~\ref{local low omega} by first proving the following result, which functions as a ``black box'' that converts a bound on the fractional chromatic number into a result about fractional coloring with local demands (and consequently a bound on the independence number).  We will apply this theorem with $(1 + o(1))^{-1}\log d$, $\log d / (200\omega\log\log d)$, and $6c\log c$ playing the role of $r(d)$ to obtain Theorems~\ref{approximate tri-free},~\ref{approximate Kr}, and~\ref{local low omega}, respectively.  Essentially, we show that for a graph $G$, if every subgraph $H$ of $G$ induced by vertices of degree at most $\Delta$ in $G$ has fractional chromatic number at most $\Delta / r(\Delta)$, then $G$ has an $f$-coloring if $f$ is a demand function satisfying $f(v) \leq (2e + o(1))r(d(v)) / (d(v)\log r(d(v)\cdot r(d(v))))$.  Note that here the demand for a vertex $v$ in terms of its degree is similar to the hypothesis about the fractional chromatic number of the subgraphs of $G$, with an additional factor of $(2e + o(1)) / \log r(d(v)\cdot r(d(v)))$.

\begin{theorem}\label{approximate local demands version}
  For every $\varepsilon > 0$, there exists $\delta > 0$ such that the following holds.  Let $r : \mathbb N \rightarrow \mathbb R_{> 1}$ be nondecreasing such that 
  \begin{equation}\label{r constraint}
    \lim_{n\rightarrow\infty}\frac{r(n\cdot r(n))}{e\cdot r(n)\cdot\log(r(n\cdot r(n)))} = 0,
  \end{equation}
  and let $G$ be a graph such that $\chi_f(H) \leq \Delta / r(\Delta)$ for every sufficiently large $\Delta$ and every subgraph $H\subseteq G$ such that $\max_{v\in V(H)}d_G(v) \leq \Delta$.  If $G$ has demand function $f$ such that
  \begin{equation*}
    f(v) \leq \min\left\{(2e + \varepsilon)^{-1}\frac{r(d(v))}{d(v)\log r(d(v)\cdot r(d(v)))}, \delta\right\}
  \end{equation*}
  for each $v\in V(G)$, then $G$ has an $f$-coloring.
\end{theorem}

We prove Theorem~\ref{approximate local demands version} using a new technique that we call ``stratification''.  In order to find an $f$-coloring of $G$, we partition the vertices of $G$ into ``strata'' and color each strata in turn.  The proof suggests that a more refined approach involving list coloring and ``color degrees'' could be used to resolve Conjectures~\ref{local tri-free} and~\ref{local K_r-free}; in Section~\ref{fractional color degree section}, we state Conjecture~\ref{local color degree conjecture}, which generalizes Molloy's bound, and prove that Conjecture~\ref{local color degree conjecture} implies Conjecture~\ref{local tri-free}.

\subsection{Reed's Conjecture}

Fajtlowicz~\cite{F78, F84} proved that every graph $G$ satisfies $|V(G)|/\alpha(G) \leq (\Delta(G) + \omega(G) + 1)/2$, where $\omega(G)$ is the size of the largest clique in $G$.  
Reed's Conjecture~\cite{R98} states that this bound can be generalized to the chromatic number up to rounding, that is every graph $G$ satisfies $\chi(G) \leq \lceil (\Delta(G) + 1 + \omega(G))/2\rceil$.  This conjecture is currently a major open problem in graph coloring; however, it is a well-known folklore result that its fractional relaxation holds (for example, see~\cite{MR02}).  In fact, the rounding is not necessary, that is $\chi_f(G) \leq (\Delta(G) + 1 + \omega(G))/2$ for every graph $G$, so this bound generalizes Fajtlowicz's~\cite{F78, F84} bound on the independence number.  We conjecture the ``local demands'' version of this result, as follows.

\begin{conjecture}[Local Fractional Reed's]\label{local reeds conj}
  If $G$ is a graph with demand function $f$ such that
  \begin{equation*}
    f(v) \leq \frac{2}{d(v) + \omega(v) + 1}
  \end{equation*}
  for each $v\in V(G)$, then $G$ has an $f$-coloring.
\end{conjecture}

Brause et al.~\cite{CRRS16} conjectured that every graph $G$ satisfies $\alpha(G) \geq \sum_{v\in V(G)}2/(d(v) + \omega(v) + 1)$, and Conjecture~\ref{local reeds conj} generalizes their conjecture.  Note that Theorem~\ref{local low omega} implies that for some $\varepsilon > 0$, Conjecture~\ref{local reeds conj} and the conjecture of Brause et al.~\cite{CRRS16} hold if each vertex $v$ satisfies $\omega(v) \leq d(v)^{\varepsilon}$.  One of the main results in~\cite{CRRS16} is that perfect graphs satisfy this conjectured bound on the independence number.  In Section~\ref{chi-bounded section}, we strengthen this result by proving that Conjecture~\ref{local reeds conj} holds in a stronger sense for perfect graphs.  A graph is \textit{perfect} if every induced subgraph has chromatic number equal to its clique number.  We prove the following.

\begin{theorem}\label{local frac perfect}
  If $G$ is a perfect graph with demand function $f$ such that $f(v) \leq 1/\omega(v)$ for each $v\in V(G)$, then $G$ has an $f$-coloring.
\end{theorem}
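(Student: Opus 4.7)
My plan is to reduce Theorem~\ref{local frac perfect} to a weighted clique bound via LP duality and then apply the weighted perfect graph theorem. The first step is to observe that, by Definition~\ref{fractional coloring definition}, producing an $f$-coloring of $G$ is equivalent to finding a nonnegative weighting $\{y_I\}$ indexed by independent sets $I$ of $G$ satisfying $\sum_{I\ni v}y_I \geq f(v)$ for every $v$ and $\sum_I y_I \leq 1$ (given such weights, one assigns $v$ the union of sub-intervals of $[0,1]$ of length $y_I$ for $I\ni v$). Thus $G$ has an $f$-coloring if and only if the weighted fractional chromatic number $\chi_f(G,f)$ is at most $1$, and the goal becomes showing $\chi_f(G,f)\leq 1$.

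Next I would invoke Lov\'asz's weighted perfect graph theorem: since $G$ is perfect,
$$\chi_f(G,f) \;=\; \omega(G,f) \;:=\; \max_{K\text{ clique of }G}\sum_{v\in K} f(v).$$
It therefore suffices to show $\sum_{v\in K} f(v)\leq 1$ for every clique $K$ of $G$. This is where the hypothesis $f(v)\leq 1/\omega(v)$ does the work. For any $v\in K$, the clique $K$ lies inside $N[v]$, so $K$ is a clique of $G[N[v]]$, giving $|K|\leq \omega(v)$ and hence $1/\omega(v)\leq 1/|K|$. Summing over $v\in K$,
$$\sum_{v\in K} f(v) \;\leq\; \sum_{v\in K}\frac{1}{\omega(v)} \;\leq\; \sum_{v\in K}\frac{1}{|K|} \;=\; 1,$$
which completes the argument.

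I don't expect any substantial obstacles here. Both ingredients---LP duality for fractional $f$-colorings and the weighted perfect graph theorem---are standard, and the only combinatorial content is the one-line averaging identity $\sum_{v\in K}1/\omega(v)\leq 1$, which holds in any graph. The conceptual point worth flagging is that this averaging is precisely what makes $f(v)\leq 1/\omega(v)$ the ``right'' local demand to pair with perfection: it is exactly the bound that matches the clique side of the weighted perfect graph theorem term by term.
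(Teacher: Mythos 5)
Your proof is correct, but it takes a different route from the paper's. The paper derives Theorem~\ref{local frac perfect} as a special case of Theorem~\ref{chi-bounded meta-theorem}: given the demand function, it forms the blowup of $G$ where each vertex is replaced by a clique of size $N\cdot f(v)$ (for a common denominator $N$), shows via Lemma~\ref{clique size of blowup lemma} that this blowup has clique number at most $N$, uses the fact that perfect graphs are closed under blowups (Lov\'asz's replication lemma) together with $\chi = \omega$ for perfect graphs to conclude the blowup is $N$-colorable, and then invokes the equivalence in Remark~\ref{other equivalent definitions remark}(1). Your argument instead stays on the dual/polyhedral side: you rephrase the $f$-coloring condition as membership in the stable set polytope (equivalently $\chi_f(G,f)\leq 1$), invoke the weighted perfect graph theorem to reduce this to the clique constraint $\sum_{v\in K}f(v)\leq 1$, and verify that constraint by the term-by-term bound $1/\omega(v)\leq 1/|K|$ for $v\in K$. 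The two routes are close in depth---the weighted perfect graph theorem you cite is essentially the replication lemma plus perfection packaged polyhedrally---but the paper's detour through the blowup meta-theorem is chosen because the same machinery also yields Corollary~\ref{quasiline chi-bounded} and the claw-free corollary for free, whereas your argument, being tailored to the clique-inequality description of the stable set polytope, is specific to perfect graphs. For the theorem as stated, your proof is cleaner and more self-contained; it just doesn't generalize to the other $\chi$-bounded classes the paper treats in that section.
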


A class of graphs is \textit{$\chi$-bounded} with \textit{$\chi$-binding function} $g$ if every induced subgraph $H$ of a graph in the class satisfies $\chi(H) \leq g(\omega(H))$.  Thus, perfect graphs form the family of graphs with the identity as the $\chi$-binding function.  We actually derive Theorem~\ref{local frac perfect} by proving a more general result (Theorem~\ref{chi-bounded meta-theorem}) about $\chi$-bounded classes of graphs with a linear $\chi$-binding function, and we derive several other interesting results (for example, Corollaries~\ref{quasiline chi-bounded} and \ref{claw-free chi-bounded}) of a similar flavor.

\subsection{Edge and total coloring}

Line graphs also form a $\chi$-bounded class of graphs, which leads us to Section~\ref{edge coloring section}, in which we prove results about edge coloring.  Vizing's Theorem~\cite{V64}, one of the most classical results in graph coloring, states that every graph $G$ can be edge-colored with at most $\Delta(G) + 1$ colors, and Vizing actually generalized this result to coloring line graphs of multigraphs.  We prove the ``local demands'' version of the generalized Vizing's Theorem~\cite{V64}, as follows.

\begin{theorem}[Local Fractional Generalized Vizing's]\label{fractional generalized vizings}
    If $G$ is a multigraph and $f$ is a demand function for the line graph $L(G)$ such that each $e\in V(L(G))$ with $e = uv\in E(G)$ satisfies $f(e) \leq 1 / (\max\{d(u), d(v)\} + \edgemult{uv})$, where $\edgemult{uv}$ is the number of edges of $G$ incident to both $u$ and $v$, then $L(G)$ has an $f$-coloring.
\end{theorem}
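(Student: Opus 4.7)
The plan is to prove the theorem via LP duality, reducing it to showing that $x_e := f(e)$ is a feasible point of Edmonds' matching polytope of $G$. By the LP duality fact of Dvo\v{r}\'ak, Sereni, and Volec~\cite{DSV14} described in Section~2, $L(G)$ has an $f$-coloring if and only if for every weight function $w : E(G) \to \mathbb{R}_+$ there is a matching $M$ in $G$ with $\sum_{e \in M} w(e) \geq \sum_{e \in E(G)} w(e) f(e)$. By Edmonds' matching polytope theorem, the maximum weighted matching equals the optimum of the LP maximizing $\sum_e w(e) x_e$ over the polytope defined by $x_e \geq 0$, $\sum_{e \ni v} x_e \leq 1$ for every $v \in V(G)$, and $\sum_{e \in E(G[S])} x_e \leq \lfloor |S|/2 \rfloor$ for every $S \subseteq V(G)$ with $|S|$ odd and at least $3$. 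Hence it suffices to verify that $x = f$ satisfies these two families of constraints.

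For the vertex constraints, grouping edges at $v$ by the other endpoint gives $\sum_{e \ni v} f(e) \leq \sum_{u \in N(v)} \mu(uv)/(d(v)+\mu(uv))$. Since $\sum_{u \in N(v)} \mu(uv) = d(v)$, the AM-HM inequality (equivalently, convexity of $x \mapsto 1/(d+x)$) bounds this by $|N(v)|/(|N(v)|+1) < 1$. For the odd-set constraints, the key step is the symmetric inequality $2 f(uv) \leq 1/(d(u)+\mu(uv)) + 1/(d(v)+\mu(uv))$, which follows from $2/(\max\{a,b\}+c) \leq 1/(a+c)+1/(b+c)$ (immediate from $a \leq b$). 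Summing this over $e \in E(G[S])$ and interchanging the order of summation yields
\[ 2 \sum_{e \in E(G[S])} f(e) \leq \sum_{v \in S} \sum_{u \in N(v) \cap S} \frac{\mu(uv)}{d(v) + \mu(uv)} \leq \sum_{v \in S} \frac{n_S(v)}{n_S(v) + 1}, \]
where $n_S(v) := |N(v) \cap S|$ and the last inequality is the AM-HM type bound $\sum_{u \in A} m_u/(d+m_u) \leq |A|/(|A|+1)$, valid whenever $\sum_{u \in A} m_u \leq d$. Since $n_S(v) \leq |S|-1$ for every $v \in S$, we get $\sum_{v \in S} 1/(n_S(v)+1) \geq 1$, so the right side is at most $|S|-1$; hence $\sum_{e \in E(G[S])} f(e) \leq (|S|-1)/2 = \lfloor |S|/2 \rfloor$, as required.

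The main obstacle is the odd-set constraint; the vertex constraints are immediate. The crux is the conversion of $\max$ to an average via the symmetric inequality, combined with the observation that $n_S(v) \leq |S|-1$ gives just enough slack to reach the integer floor $\lfloor |S|/2 \rfloor$ rather than merely $|S|/2$. The bound is tight (for instance when $G = K_{2k+1}$ with all multiplicities equal to $1$), so the proof has no slack to spare; once both constraints are verified, LP duality together with Edmonds' theorem concludes the argument.
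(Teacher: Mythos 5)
Your proposal is correct and takes essentially the same route as the paper: reduce to membership in Edmonds' matching polytope, verify the degree constraints by grouping parallel edges and applying Jensen's inequality, and verify the odd-set constraints by symmetrizing the $\max$ over the two endpoints and double-counting. The only substantive difference is at the very end, where you replace the paper's inductive Lemma~\ref{subgraph constraint lemma} (that $\sum_{v}d(v)/(d(v)+1)\leq |V|-1$ for a simple graph) with the direct observation that $|N(v)\cap S|\leq |S|-1$ forces $\sum_{v\in S}1/(|N(v)\cap S|+1)\geq 1$ --- a genuine simplification --- while your choice to work in the multigraph's matching polytope rather than first aggregating parallel edges into the underlying simple graph is only cosmetic.
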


For any graph $G$, the \textit{total graph} of $G$, denoted $T(G)$, is the graph with vertices $V(G)\cup E(G)$ where a vertex $v\in V(G)$ is adjacent in $T(G)$ to every $u\in N(v)$ and every edge incident to $v$, and an edge $uv\in E(G)$ is adjacent in $T(G)$ to $u, v$, and every edge incident to $u$ or $v$.  The famous Total Coloring Conjecture~\cite{B65} states that every graph $G$ satisfies $\chi(T(G)) \leq \Delta(G) + 2$.  Kilakos and Reed~\cite{KR93} proved the fractional relaxation of the Total Coloring Conjecture.  We conjecture the local demands version of this conjecture, as follows.
\begin{conjecture}[Local Fractional Total Coloring Conjecture]\label{local fractional TCC}
  If $G$ is a graph and $f$ is a demand function for $T(G)$ such that each $v\in V(G)$ satisfies $f(v) \leq 1/(d(v) + 2)$ and each $uv \in E(G)$ satisfies $f(uv) \leq 1/(\max\{d(u), d(v)\} + 2)$, then $T(G)$ has an $f$-coloring.
\end{conjecture}

By Theorem~\ref{caro wei theorem}, if $G$ is a graph and $f$ is a demand function for $T(G)$ such that $f(v) \leq 1/(2d(v) + 1)$ for each vertex $v\in V(G)$ and $f(uv) \leq 1/(d(u) + d(v) + 1)$ for each edge $uv\in E(G)$, then $T(G)$ has an $f$-coloring.  It would be interesting to prove the weakening of Conjecture~\ref{local fractional TCC} in which each vertex $v$ demands $f(v) = 1/(d(v) + C)$ and each edge $uv$ demands $f(uv) = 1/(\max\{d(u), d(v)\} + C)$ for some constant $C$.  The consequence of this result in the setting of ``fractional coloring with local list sizes'' follows easily from Theorem~\ref{fractional generalized vizings} with $C = 3$, and it implies that for every graph $G$, the total graph $T(G)$ satisfies $\chi_f(T(G)) \leq \Delta(G) + 3$.  This result is a classic ``easy example'' of a bound on $\chi_f$ (see for example~\cite[Proposition~4.6.3]{SU11}), and the machinery developed in this paper yields the following streamlined proof.

\begin{corollary}[Local-list Fractional Weak TCC]\label{weak list TCC}
  If $G$ is a graph and $L$ is a fractional list-assignment for $T(G)$ such that $\meas{L(v)} \geq d(v) + 1$ for each vertex $v\in V(G)$ and $\meas{L(uv)} \geq \max\{d(u), d(v)\} + 3$ for each edge $uv\in E(G)$, then $T(G)$ has a fractional $L$-coloring.
\end{corollary}
\begin{proof}
  By Theorem~\ref{caro wei theorem} and Proposition~\ref{local fractional list implication}, $G$ has a fractional $L$-coloring $\phi$.  For each edge $uv \in E(G)$, let $L'(uv) = L(uv)\setminus (\phi(u)\cup \phi(v))$, and note that $\meas{L'(uv)} \geq \max\{d(u), d(v)\} + 1$.  Thus, by Theorem~\ref{fractional generalized vizings} and Proposition~\ref{local fractional list implication}, $L(G)$ has an $f$-coloring $\phi'$.  We can combine $\phi$ and $\phi'$ to obtain a fractional $L$-coloring of $T(G)$, as desired.
\end{proof}

It would also be interesting to consider total colorings of graphs of high girth, as in~\cite{KKK11, KKS10}.

\subsection{Brooks' Theorem}

In a companion paper~\cite{KP19-brooks} (see also \cite{KP19-ind}), we consider the analogue of Brooks' Theorem~\cite{B41} in the setting of fractional coloring with local demands, and we significantly improve Theorem~\ref{caro wei theorem}.  We prove that if $G$ is a graph with demand function $f$ such that $f(v) \leq 1/(d(v) + 1/2)$ for each vertex $v\in V(G)$ and every clique $K\subseteq V(G)$ satisfies $\sum_{v\in K}f(v) \leq 1$, then $G$ has an $f$-coloring.  As a corollary, we obtain new bounds on the independence number for graphs in which less than half of the vertices in each clique are simplicial.  In~\cite{KP19-brooks}, we also propose local demands analogues of results about graphs with chromatic number close to the maximum degree~\cite{FMR05, MR14, R99}, notably the Borodin-Kostochka Conjecture~\cite{BK77}.

\section{Fractional coloring with lists}\label{list section}

In this section, we discuss two different ways in which one may consider a notion of list coloring in the fractional coloring setting.  As illustrated by Definitions~\ref{f-coloring definition} and~\ref{common denominator definition}, there is a dichotomy between the ``discrete'' and the ``analytic'' perspectives in fractional coloring, and each of the two ways to consider list coloring corresponds to one of these perspectives.

\subsection{Discrete list versions}\label{ATV section}

First we discuss list coloring from the discrete perspective.  We need the classical concept of \textit{list coloring}, which was introduced as a generalization of proper coloring independently by Erd\H os, Rubin, and Taylor~\cite{ERT80} and Vizing~\cite{V76} in the 1970s.

\begin{definition}
  Let $G$ be a graph, and let $L = (L(v) \subseteq \mathbb N : v\in V(G))$ be a collection of ``lists of colors''.  
  \begin{itemize}
  \item If $L(v)$ is non-empty for each vertex $v\in V(G)$, then $L$ is a \textit{list-assignment} for $G$, and if $|L(v)| \geq k$ for every $v\in V(G)$, then $L$ is a \textit{$k$-list-assignment}.
  \item An \textit{$L$-coloring} of $G$ is a proper coloring $\phi$ of $G$ such that $\phi(v)\in L(v)$ for every vertex $v\in V(G)$, and $G$ is \textit{$L$-colorable} if there is an $L$-coloring of $G$.
  \end{itemize}
  The \textit{list chromatic number} of $G$, denoted \textit{$\chi_\ell(G)$}, is the smallest $k$ such that $G$ is $L$-colorable for any $k$-list-assignment $L$.  If $\chi_\ell(G) \leq k$, then $G$ is \textit{$k$-list-colorable}.
\end{definition}

Erd\H os, Rubin, and Taylor~\cite{ERT80} called the list chromatic number the \textit{choice number} and a $k$-list colorable graph \textit{$k$-choosable}.  They introduced the following definition that is related to fractional coloring: a graph $G$ is \textit{$(a : b)$-choosable} if for every $a$-list-assignment $L$, there is a map $\phi$ that assigns to each vertex $v\in V(G)$ a $b$-subset of $L(v)$ such that $\phi(u)\cap \phi(v) = \varnothing$ for each $uv\in E(G)$.  If $G$ is $(a : b)$-choosable, then $G$ has an $(f, b)$-coloring where $f$ is a demand function for $G$ such that $f(v) = a/b$ for each $v\in V(G)$.  Erd\H os, Rubin, and Taylor~\cite{ERT80} posed the following question regarding $(a : b)$-choosability.

\begin{problem}[Erd\H os, Rubin, and Taylor \cite{ERT80}]\label{multiplicative list-coloring problem}
  If a graph $G$ is $(a, b)$-list-colorable, is it $(am, bm)$-list-colorable for every positive integer $m$?
\end{problem}

Recently, Dvo\v{r}\'{a}k, Hu, and Sereni~\cite{DHS18} resolved Problem~\ref{multiplicative list-coloring problem} with an answer of ``no'': they proved the existence of a 4-choosable graph that is not $(8 : 2)$-choosable.  However, Alon, Tuza, and Voigt~\cite{ATV97} proved that Problem~\ref{multiplicative list-coloring problem} is true in a strong sense for large $m$, as follows.

\begin{theorem}[Alon, Tuza, and Voigt \cite{ATV97}]\label{multiplicative coloring implies choosability}
  If a graph $G$ is $(a, b)$-colorable, then there exists an integer $m > 1$ such that $G$ is $(am, bm)$-list-colorable.
\end{theorem}

In Section~\ref{fractional coloring intro}, we discussed a notion of the fractional list chromatic number from the analytic perspective.  We can also define a fractional list chromatic number from the discrete perspective, as follows.

\begin{definition}
  Let $G$ be a graph with demand function $f$, and let $N$ be a common denominator for $f$.
  \begin{itemize}
  \item If $L$ is an $N$-list-assignment for $G$, an \textit{$f$-fold $L$-coloring} of $G$ is an assignment $\psi$ of subsets of $L(v)$ to the vertices of $G$ such that for every $uv\in E(G)$, $\psi(u)\cap \psi(v) = \varnothing$ and for every $v\in V(G)$, $|\psi(v)| \geq N\cdot f(v)$.
  \item The graph $G$ is \textit{$(f, N)$-list-colorable} if it has an $f$-fold $L$-coloring for every $N$-list-assignment $L$.
  \end{itemize}
  The \textit{fractional list chromatic number} is the infimum over all positive real numbers $k$ such that $G$ is $(f, N)$-list-colorable where $f$ is a demand function for $G$ such that $f(v) = 1/k$ for each $v\in V(G)$ and $N$ is a common denominator for $f$.
\end{definition}

Theorems~\ref{equivalent definitions} and~\ref{multiplicative coloring implies choosability} imply that this version of the fractional list chromatic number is also equal to the fractional chromatic number.  It is natural to wonder if this situation changes if we consider demand functions that are not constant.  In this subsection, we show that Theorem~\ref{multiplicative coloring implies choosability} holds for any demand function, as follows.

\begin{theorem}\label{local multiplicative coloring}
  Let $G$ be a graph with demand function $f$, and let $N$ be a common denominator for $f$.  If $G$ has an $(f, N)$-coloring, then there exists a common denominator $M > N$ for $f$ such that $G$ is $(f, M)$-list-colorable.
\end{theorem}

The proof of Theorem~\ref{local multiplicative coloring} is similar to the proof of Theorem~\ref{multiplicative coloring implies choosability}.  We need the following two lemmas from~\cite{ATV97}.

\begin{lemma}[Alon, Tuza, and Voigt \cite{ATV97}]\label{ATV lemma 3.1}
  Let $(n_i : i \in I)$ be a sequence of positive integers, where each $n_i$ is at most $k$.  Let $M$ and $N$ be two integers and suppose that $\sum_{i\in I}n_i = M$, that $M/N$ is divisible by all integers up to $k$, and that $k\cdot \mathrm{lcm}(2, 3, \dots, k) \leq M/N$, where $\mathrm{lcm}(2, 3, \dots, k)$ denotes the least common multiple of $2, 3, \dots, k$.  Then there is a partition $I = I_1 \cup I_2 \cup \cdots \cup I_N$ of $I$ into $N$ pairwise disjoint sets such that for every $j\in\{1, \dots, N\}$, $\sum_{i\in I_j}n_i = M/N$.
\end{lemma}

For the next lemma, we need the following definitions.  A hypergraph $H = (X, F)$ is \textit{$\ell$-uniform} if each of its edges contain precisely $\ell$ vertices.  If $R$ is a subset of the vertex set of $H$, then let $H_R$ denote the hypergraph with vertex set $R$ and edge set $\{e \cap R : e \in F\}$

\begin{lemma}[Alon, Tuza, and Voigt \cite{ATV97}]\label{ATV lemma 3.2}
  If $H = (X, F)$ is a uniform hypergraph with $n$ edges, then there is a partition $X = \cup_{i\in I}X_i$ of $X$ into pairwise disjoint sets such that $H_{X_i}$ is $n_i$-uniform and $n_i \leq (n+1)^{(n+1)/2}$ for every $i\in I$.
\end{lemma}

\begin{proof}[Proof of Theorem~\ref{local multiplicative coloring}]
  Let $n = |V(G)|$, let $k = (n + 1)^{(n + 1)/2}$, and choose $M$ to be divisible by all integers up to $\max\{k^4, N^4\}$.

  We show that $G$ is $(f, M)$-list-colorable.  To that end, let $L$ be an $M$-list-assignment for $G$.  It suffices to show that $G$ has an $f$-fold $L$-coloring.  Let $H$ be the hypergraph with vertex set $X = \cup_{v\in V(G)}L(v)$ and edges $F = (L(v) : v\in V(G))$.  By Lemma~\ref{ATV lemma 3.2}, there is a partition $(X_i : i\in I)$ of the colors $X$ so that for each $i$ and each $v\in V(G)$,
  \begin{equation*}
    |L(v)\cap X_i| = n_i,
  \end{equation*}
  where $n_i \leq k$.  Note that $\sum_{i\in I}n_i = M$.  By Lemma~\ref{ATV lemma 3.1}, there is a partition $I_1, \dots, I_N$ of $I$ such that for each $j\in\{1, \dots, N\}$, $\sum_{i\in I_j}n_i = M / N$.

  By assumption, $G$ has an $(f, N)$-coloring $\psi$.  For each $v\in V(G)$, let
  \begin{equation*}
    \psi'(v) = \bigcup_{j\in \psi(v)}\cup_{i\in I_j}\{L(v)\cap X_i\}.
  \end{equation*}
  Now,
  \begin{equation*}
    |\psi'(v)| = \sum_{j\in \psi(v)} \sum_{i\in I_j}n_i = \sum_{j\in\psi(v)} M/N = |\psi(v)|M/N = f(v)\cdot M,
  \end{equation*}
  and $\psi'(u)$ and $\psi'(v)$ are disjoint when $u$ and $v$ are adjacent.  Hence, $\psi'$ is an $f$-fold $L$-coloring, as desired.
\end{proof}

\subsection{Fractional list coloring}\label{fractional list section}

In this subsection, we discuss list coloring from the analytic perspective, as in Definition~\ref{fractional list definition}.  The concepts we introduce in this subsection are useful in the setting of fractional coloring with local demands; some of the results and conventions introduced in this subsection are needed in Sections~\ref{caro wei proof section} and \ref{tri-free section} and in~\cite{KP19-brooks}.  We need the following local analogue of Definition~\ref{fractional list definition}.

\begin{definition}
  If $G$ is a graph with fractional list-assignment $L$, a \textit{$(g, L)$-coloring} of $G$ is a fractional coloring of $G$ such that every vertex $v\in V(G)$ satisfies $\phi(v)\subseteq L(v)$ and $\meas{\phi(v)} \geq g(v)$.
\end{definition}

In Sections~\ref{caro wei proof section} and \ref{tri-free section} and in~\cite{KP19-brooks}, we often find a fractional coloring of an induced subgraph of a graph $G$ and try to extend it to all of $G$.  To that end, we introduce the following notation.

\begin{definition}
  Let $G$ be a graph with fractional list-assignment $L$, and let $S \subseteq V(G)$.  If $\phi$ is a fractional $L$-coloring of $G[S]$, then we let \textit{$L_\phi$} be the fractional list-assignment for $G - S$ where for each $v\in V(G - S)$,
    \begin{equation*}
      L_\phi(v) = L(v) \setminus \bigcup_{u\in S\cap N(v)}\phi(u).
    \end{equation*}
\end{definition}

If the fractional list-assignment $L$ in the above definition is not specified, then we assume $L(v) = [0, 1]$ for each vertex $v$.

The following proposition is self-evident.

\begin{proposition}\label{extending partial coloring}
  Let $G$ be a graph with demand function $f$ and fractional list-assignment $L$.  If for some $S\subseteq V(G)$, $\phi$ is an $(f|_S, L)$-coloring of $G[S]$ such that $G - S$ has a fractional $(f|_{V(G)\setminus S}, L_\phi)$-coloring, then $G$ has an $(f, L)$-coloring.\hfill$\square$
\end{proposition}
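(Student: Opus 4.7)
The plan is to prove this by directly constructing an $f$-coloring $\phi^*$ of $G$ as the ``union'' of $\phi$ on $S$ and the promised fractional $(f|_{V(G)\setminus S}, L_\phi)$-coloring $\psi$ on $V(G)\setminus S$. Concretely, I would set $\phi^*(v) = \phi(v)$ for $v \in S$ and $\phi^*(v) = \psi(v)$ for $v \in V(G)\setminus S$, and then verify the two defining conditions of an $f$-coloring.

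For the measure condition, on $S$ this is immediate from $\phi$ being an $f|_S$-coloring, and on $V(G)\setminus S$ it is immediate from $\psi$ being, in particular, an $f|_{V(G)\setminus S}$-coloring. For the disjointness condition on an edge $uv \in E(G)$ there are three cases. If both endpoints lie in $S$, disjointness follows because $\phi$ is a fractional coloring of $G[S]$; if both lie in $V(G)\setminus S$, it follows because $\psi$ is a fractional coloring of $G-S$. The only case requiring anything at all is the mixed case, say $u \in S$ and $v \in V(G)\setminus S$: then $u \in S \cap N(v)$, so by the definition of $L_\phi$,
\[
\psi(v) \;\subseteq\; L_\phi(v) \;=\; [0,1]\setminus \bigcup_{w\in S\cap N(v)}\phi(w) \;\subseteq\; [0,1]\setminus \phi(u),
\]
whence $\phi^*(u) \cap \phi^*(v) = \phi(u)\cap \psi(v) = \varnothing$.

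There is essentially no obstacle: the definition of $L_\phi$ is engineered exactly so that any fractional $L_\phi$-coloring of $G-S$ automatically respects the colors already used by $\phi$ across the cut between $S$ and $V(G)\setminus S$. This is why the authors label the proposition self-evident and leave the proof as a $\square$.
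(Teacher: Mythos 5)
Your proof is correct and is exactly the routine verification the paper omits by declaring the proposition self-evident: glue $\phi$ and $\psi$ together and note that the definition of $L_\phi$ handles the edges crossing between $S$ and $V(G)\setminus S$. Nothing further is needed.
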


If $\phi$ is a fractional coloring of $G[S]$ for some $S\subseteq V(G)$ and $u\in V(G)$, then $v$ \textit{sees the color} $\bigcup_{u\in S\cap N(v)}\phi(u)$, and if $\mu(\bigcup_{u\in S\cap N(v)}\phi(u)) \leq \alpha$, then $v$ \textit{sees at most $\alpha$ color}.  If $\phi'$ is a fractional coloring of $G[S']$ for some $S'\subseteq V(G)$ such that $S\subseteq S'$ and $\phi = \phi|_S$, then $\phi'$ \textit{extends} $\phi$.

In fractional coloring, one can partition the real line or the $[0, 1]$-interval as finely as needed and find fractional colorings in each part separately.  The following lemma makes use of this idea.  

\begin{lemma}\label{list coloring lemma}
  Let $G$ be a graph with demand function $f$, fractional list-assignment $L$, and $g : V(G)\rightarrow \mathbb R$.  If $g(v) \leq f(v)\mu(L(v))$ for each $v\in V(G)$ and for each $S\subseteq V(G)$ such that $\meas{\cap_{v\in S}L(v)} > 0$, the graph $G[S]$ has an $f$-coloring, then $G$ has a fractional $(g, L)$-coloring.
\end{lemma}
\begin{proof}
  By possibly resizing and shifting $L$ and scaling $g$, we may assume without loss of generality that $L(v)\subseteq [0, 1]$ for each vertex $v$.
  
  For each $S\subseteq V(G)$, let
  \begin{equation*}
    C_S = (\cap_{v\in S}L(v))\setminus(\cup_{v\in V(G)\setminus S}L(v)),
  \end{equation*}
  and for each $v\in S$, let $L_S(v) = C_S$ and $f_S = f(v)\cdot \meas{C_S}$.  Note that if $S \neq S'$, then $C_S \cap C_{S'} = \varnothing$.  By assumption, $G[S]$ has an $f$-coloring, so $G[S]$ has an $(f_S, L_S)$-coloring $\phi_S$.  For each $v\in V(G)$, let $\phi(v) = \cup_{S\ni v}\phi_S(v)$.  Now $\phi$ is a fractional $(g, L)$-coloring, as desired.
\end{proof}

Lemma~\ref{list coloring lemma} implies that if a graph $G$ has an $(f/c)$-coloring for some $c\in (0, 1)$, then $G$ has an $(f, L)$-coloring for any fractional list-assignment $L$ in which $\meas{L(v)} \geq c$ for each $v\in V(G)$.  That is, the ``worst'' uniform fractional list-assignment is the one which assigns the same list to every vertex.  In the proof of the main result of~\cite{KP19-brooks}, we encounter a uniform fractional list-assignment, and we go to considerable length in the proof to ensure that vertices have different lists.  

Lemma~\ref{list coloring lemma} also implies Proposition~\ref{local fractional list implication}.
It is interesting to study fractional colorings with respect to fractional list-assignments that are not necessarily uniform.  These types of problems naturally have applications to ``precoloring extension''-type problems in the fractional setting.  Indeed, we prove some such results below (Lemmas~\ref{halls} and \ref{coloring K_n minus a matching}), and we use these results in~\cite{KP19-brooks}.

In~\cite{KP19-brooks}, we often use the following lemma of Edwards and King~\cite[Lemma~3]{EK13}, which is proved using Hall's Theorem.
\begin{lemma}[Edwards and King \cite{EK13}]\label{halls}
  If $H$ is a graph with demand function $g$ and fractional list-assignment $L$ such that for each $S\subseteq V(H)$,
  \begin{equation*}
    \sum_{v\in S} g(v) \leq \meas{\bigcup_{v\in S}L(v)},
  \end{equation*}
  then $H$ has a $(g, L)$-coloring.
\end{lemma}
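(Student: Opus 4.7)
The plan is to reduce the statement to a classical bipartite transportation/Hall-type result by refining the colors according to which lists contain them. For each $T \subseteq V(H)$, define the ``atom''
\[
C_T \;=\; \Bigl(\bigcap_{v \in T} L(v)\Bigr) \setminus \Bigl(\bigcup_{v \notin T} L(v)\Bigr).
\]
Since $V(H)$ is finite, the $C_T$ form a finite measurable partition of $\bigcup_{v\in V(H)} L(v)$ (up to a null set), and for each $v\in V(H)$ we have $L(v) = \bigsqcup_{T \ni v} C_T$ up to measure zero.

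Next, I would set up a fractional assignment: find nonnegative reals $x_{T,v}$ for each pair with $v\in T$ such that
\[
\sum_{v \in T} x_{T,v} \;\leq\; \mu(C_T) \quad \text{for every } T, \qquad \sum_{T \ni v} x_{T,v} \;\geq\; g(v) \quad \text{for every } v \in V(H).
\]
This is a standard bipartite transportation problem, so by the defect form of Hall's Theorem (or equivalently max-flow/min-cut, or LP duality), such an $x$ exists if and only if for every $S \subseteq V(H)$,
\[
\sum_{v \in S} g(v) \;\leq\; \sum_{T : T \cap S \neq \varnothing} \mu(C_T).
\]
The right-hand side is precisely $\mu\bigl(\bigcup_{v \in S} L(v)\bigr)$, because a point of some $L(v)$ with $v \in S$ lies in $C_T$ for some $T \ni v$, hence with $T\cap S \ne \varnothing$, and conversely. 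Thus the hypothesis of the lemma is exactly the Hall condition, and a valid $x$ exists.

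Finally, I would realize $x$ as honest measurable sets. For each $T$, since Lebesgue measure restricted to $C_T$ is nonatomic, I can partition $C_T$ into measurable pieces $A_{T,v}$ (for $v \in T$) with $\mu(A_{T,v}) = x_{T,v}$. Setting $\phi(v) = \bigcup_{T \ni v} A_{T,v}$ yields $\mu(\phi(v)) \geq g(v)$, $\phi(v)\subseteq L(v)$, and the $\phi(v)$ are pairwise disjoint (in particular across every edge of $H$), so $\phi$ is the desired $(g,L)$-coloring.

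There is no real obstacle: the only point that requires any care is verifying that the Hall-type condition on the bipartite transportation problem matches the hypothesis, which is a one-line measure computation using the partition into the $C_T$'s. The graph structure of $H$ plays no role beyond being finite, since the construction produces sets that are pairwise disjoint for \emph{all} pairs of vertices, which is stronger than needed for a proper fractional coloring.
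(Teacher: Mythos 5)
Your proof is correct, and it follows the same route the paper indicates for this cited lemma (Edwards--King prove it "using Hall's Theorem"): partitioning the colors into the atoms $C_T$, checking that the hypothesis is exactly the feasibility condition for the resulting bipartite supply--demand problem, and realizing the fractional solution as measurable sets via nonatomicity of Lebesgue measure. Your observation that the construction yields sets disjoint for \emph{all} pairs of vertices (so the graph structure of $H$ is irrelevant) is also accurate and consistent with the fact that the hypothesis never mentions $E(H)$.
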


Using Lemma~\ref{halls}, we prove the following lemma, which may be of independent interest.
\begin{lemma}\label{coloring K_n minus a matching}
  Let $H \cong K_n - M$ where $M$ is a matching, and let $g$ be a demand function for $H$.
  If $L$ is a fractional list-assignment for $H$ such that
  \begin{enumerate}[(i)]
  \item for each $v\in V(H)\setminus V(M)$, we have $\meas{L(v)} \geq \sum_{u\in V(H)\setminus V(M)}g(u) + \sum_{uw\in M}\max\{g(u), g(w)\}$,
  \item for each $v\in V(M)$, we have $\meas{L(v)} \geq g(v) + \sum_{uw\in M, v\notin \{u,w\}}\max\{g(u), g(w)\}$, and 
  \item for each $uv\in M$, we have $\meas{L(u)} + \meas{L(v)} \geq \sum_{w\in V(H)}g(w)$,
  \end{enumerate}
  then $H$ has a fractional $(g, L)$-coloring. 
\end{lemma}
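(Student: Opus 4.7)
The plan is to apply Lemma~\ref{halls} (a fractional Hall-type condition), but not directly to $H$: for $S = V(H)$, Hall's condition would require $\sum_v g(v) \leq \meas{\bigcup L(v)} \leq 1$, which can fail. Instead, I would apply Lemma~\ref{halls} to an auxiliary clique $\Gamma$ on the vertex set $V(H) \cup \{\hat e : e \in M\}$, one extra ``overlap vertex'' per matching edge. For each $e = uv \in M$, the new vertex $\hat e$ will carry the shared color $\phi(u) \cap \phi(v)$; assign it list $L^\Gamma(\hat e) = L(u) \cap L(v)$ and demand $g^\Gamma(\hat e) = b_e$, where $b_e \in [0, \min\{g(u), g(v), \meas{L(u) \cap L(v)}\}]$ is an overlap parameter to be determined. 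For each $w \in V(H)$, set $L^\Gamma(w) = L(w)$ and $g^\Gamma(w) = g(w) - b_{e_w}$ (reading $b_{e_w} = 0$ when $w \notin V(M)$, and otherwise $e_w$ is the matching edge at $w$).

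A fractional $(g^\Gamma, L^\Gamma)$-coloring $\phi^\Gamma$ of the clique $\Gamma$ yields a $(g, L)$-coloring of $H$ by setting $\phi(w) := \phi^\Gamma(w) \cup \phi^\Gamma(\hat{e_w})$ for $w \in V(M)$ and $\phi(w) := \phi^\Gamma(w)$ otherwise. Since all the color sets $\phi^\Gamma(\cdot)$ are pairwise disjoint in the clique $\Gamma$, disjointness holds in $\phi$ for every adjacent pair of $H$; the only overlap is $\phi^\Gamma(\hat e)$ shared between the endpoints of each $e \in M$, which is permitted since those endpoints are non-adjacent in $H$.

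The subtle point will be choosing the $b_e$'s. A greedy choice like $b_e := \min\{g(u), g(v), \meas{L(u) \cap L(v)}\}$ is tempting but can fail: for example, on $C_4 \cong K_4 - \{u_1 v_1, u_2 v_2\}$ with all demands $1/3$ and lists $L(u_i) = [0, 2/3]$, $L(v_i) = [1/3, 1]$, the greedy choice gives $b_{e_1} = b_{e_2} = 1/3$ and overloads the common intersection $[1/3, 2/3]$, violating Hall's condition for $\{\hat{e_1}, \hat{e_2}\}$. Instead the $b_e$'s must be chosen jointly so that the Hall inequalities $\sum_{x \in S} g^\Gamma(x) \leq \meas{\bigcup_{x \in S} L^\Gamma(x)}$ hold for all $S \subseteq V(\Gamma)$, which is a linear feasibility problem in $(b_e)_{e \in M}$.

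The main obstacle will be showing that a feasible $(b_e)$ exists. By LP duality, infeasibility would produce a ``separating'' $S \subseteq V(\Gamma)$ together with weights ruled out by our hypotheses. I would split the verification by how $S$ meets $V(H) \setminus V(M)$, $V(M)$, and the overlap vertices: (i) handles $S$ containing a non-matching vertex $v_0$, since $\meas{L(v_0)} \geq \sigma := \sum_{u \in V(H)\setminus V(M)} g(u) + \sum_{uw \in M} \max\{g(u), g(w)\}$ dominates the LHS after bookkeeping with $b_e \leq \min\{g(u_e), g(v_e)\}$; (ii) handles $S$ containing a matching vertex whose partner lies outside $S$; and the remaining case, where $S$ consists of overlap vertices together with complete matching pairs, is handled by (iii), which yields $\meas{L(u) \cap L(v)} \geq \sum_w g(w) - \meas{L(u) \cup L(v)}$ and so forces the overlap regions to be large enough in aggregate for a compatible choice of $b_e$'s to exist.
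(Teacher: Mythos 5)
Your lifting to an auxiliary clique $\Gamma$ with one overlap vertex per matching edge is a genuinely different route, but as written it contains a real gap. The reformulation is in fact an equivalence: any $(g, L)$-coloring $\phi$ of $H$ with $\meas{\phi(v)} = g(v)$ exactly yields a feasible $(b_e)$ by setting $b_e = \meas{\phi(u_e)\cap\phi(v_e)}$ (these overlaps are pairwise disjoint across $e\in M$ because vertices from distinct matching edges are adjacent in $H$), and conversely a feasible $(b_e)$ gives a coloring as you describe. So ``a feasible $(b_e)$ exists'' carries the full content of the lemma, and you have not supplied the argument for it. The case analysis you sketch, organized by how $S$ meets $V(H)\setminus V(M)$, $V(M)$, and the overlap vertices, reads like a check of Hall's condition on $\Gamma$ for a \emph{fixed} $(b_e)$; but the task is to \emph{choose} $(b_e)$, and a Farkas certificate of infeasibility of that linear system would be a nonnegative combination of all the Hall inequalities together with the box constraints $0\le b_e\le\min\{g(u_e),g(v_e)\}$, not a single violating $S$. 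The closing sentence (``forces the overlap regions to be large enough in aggregate'') is exactly where an argument is required.

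For comparison, the paper avoids the joint choice entirely via a minimal-counterexample stripping argument. Among counterexamples, minimize the number of $xy\in M$ with $\meas{L(x)\cap L(y)}>0$ and, subject to that, maximize the number of matched vertices with zero demand. If every matched pair has null overlap, Lemma~\ref{halls} applies directly: hypotheses (i)--(ii) force any Hall-violating $S$ to contain a full matched pair $\{u,w\}$, and then $\meas{\cup_{v\in S}L(v)}\ge\meas{L(u)}+\meas{L(w)}\ge\sum_v g(v)$ by (iii), a contradiction. Otherwise, pick $xy\in M$ with $g(x)\le g(y)$ and positive overlap, take a maximal $C\subseteq L(x)\cap L(y)$ of measure at most $g(x)$, remove $C$ from every list, and subtract $\meas{C}$ from $g(x)$ and $g(y)$; hypotheses (i)--(iii) are preserved, the minimization parameter strictly improves (either $L'(x)\cap L'(y)$ becomes null or $g'(x)=0$), and the resulting $(g',L')$-coloring extends to a $(g,L)$-coloring by adding $C$ back to $\phi(x)$ and $\phi(y)$. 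This is, in effect, your greedy choice of $b_e$ applied to one edge at a time and made safe by induction; completing your all-at-once LP feasibility argument would require recovering that leverage and seems to cost at least as much work as the paper's proof.
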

\begin{proof}
  Suppose not.
  Choose $H, g$, and $L$ such that $H$ has no fractional $(g, L)$-coloring and the number of edges $uv\in M$ such that $\meas{L(u)\cap L(v)} \neq \varnothing$ is minimum, and subject to that, the number of vertices $u\in V(M)$ such that $g(u) = 0$ is maximum.

  First, suppose $\meas{L(u)\cap L(v)} = \varnothing$ for each $uv\in M$.  By Lemma~\ref{halls}, there exists $S\subseteq V(H)$ such that $\sum_{v\in S}g(v) < \meas{\cup_{v\in S}L(v)}$.  By (i), if $S\setminus V(M)\neq\varnothing$, then there exists $uw\in M$ such that $\{u, w\}\subseteq S$, and by (ii), if $S\cap V(M) \neq \varnothing$, then there exists $uw\in M$ such that $\{u, w\}\subseteq S$.  Hence, there exists $uw\in M$ such that $\{u, w\}\subseteq S$.  Since $\meas{L(u)\cap L(v)} = \varnothing$, we have $\meas{\cup_{v\in S}L(v)} \geq \meas{L(u)} + \meas{L(w)}$.  Therefore by (iii), $\meas{\cup_{v\in S}L(v)} \geq \sum_{v\in V(H)}g(v)$, contradicting that $\meas{\cup_{v\in S}L(v)} < \sum_{v\in S}g(v)$.

  Therefore we may assume there exists $xy\in M$ such that $\meas{L(x)\cap L(y)}\neq\varnothing$.  We may assume without loss of generality that $g(x) \leq g(y)$.
  Let $C$ be a maximal subset of $L(x)\cap L(y)$ of measure at most $g(x)$.  Now
  \begin{itemize}
  \item for each $v\in\{x,y\}$, let $g'(v) = g(v) - \meas{C}$ and $L'(v) = L(v)\setminus C$, and 
  \item for each $v\in V(H)\setminus \{x, y\}$, let $g'(v) = g(v)$ and $L'(v) = L(v)\setminus C$.
  \end{itemize}
  Note that either $L'(x)\cap L'(y) = \varnothing$ or $g'(x) = 0$.
  Now we claim that $H$ has a fractional $(g', L')$-coloring.
  By (i), for each $v\in V(H)\setminus V(M)$,
  \begin{equation*}
    \meas{L'(v)} \geq \meas{L(v)} - \meas{C} \geq \sum_{u\in V(H)\setminus V(M)}g'(u) + \sum_{uw\in M}\max\{g'(u), g'(w)\}.
  \end{equation*}
  By (ii), for each $v\in V(M)$,
  \begin{equation*}
    \meas{L'(v)} \geq \meas{L(v)} - \meas{C} \geq g'(v) + \sum_{uw\in M, v\notin \{u,w\}}\max\{g'(u), g'(w)\}.
  \end{equation*}
  By (iii), for each $uv\in M$,
  \begin{equation*}
    \meas{L'(u)} + \meas{L'(v)} \geq \meas{L(u)} + \meas{L(v)} - 2\meas{C} \geq \sum_{w\in V(H)}g'(w).
  \end{equation*}
  By the choice of $H, g$ and $L$, the graph $H$ has a fractional $(g', L')$-coloring, as claimed, contradicting that $H$ has no fractional $(g, L)$-coloring.
\end{proof}

\section{Local Fractional Greedy Coloring}\label{caro wei proof section}

In this section we present three different proofs of Theorem~\ref{caro wei theorem}.  The first proof uses Definition~\ref{fractional coloring definition}, and it is inspired by Wei's~\cite{W81} original proof of the Caro-Wei Theorem.  The proof is also suggestive of our approach in~\cite{KP19-brooks}.
\begin{proof}[First Proof of Theorem~\ref{caro wei theorem}]
  Suppose $G$ is a minimum counterexample to Theorem~\ref{caro wei theorem}.  Let $v\in V(G)$ have minimum degree.  Since $f$ satisfies the hypothesis for $G - v$ and $G$ is a minimum counterexample, $G - v$ has an $f$-coloring.  Note that for each $u\in N(v)$, we have $f(u) \leq {1}/(d(v) + 1)$.  Therefore $v$ sees at most ${d(v)}/(d(v) + 1)$ color, so $\meas{L_\phi(v)} \geq f(v)$.  Thus, $G$ has an $f$-coloring, contradicting that $G$ is a counterexample.
\end{proof}

The second proof of Theorem~\ref{caro wei theorem} is due to Alon and Spencer~\cite{AS00}.
\begin{proof}[Second Proof of Theorem~\ref{caro wei theorem}]
  Let $I$ be a random independent set of $G$ selected according to the following distribution.  Choose a total ordering $\prec$ of $V(G)$ uniformly at random, and let $v\in I$ if $v\prec u$ for all $u\in N(v)$.  Note that each vertex is in $I$ with probability ${1}/(d(v) + 1)$.  Therefore by Proposition~\ref{equivalent definitions}\ref{equiv-definitions-prob-dist}, $G$ has an $f$-coloring, as desired.
\end{proof}

The last proof of Theorem~\ref{caro wei theorem} that we present uses the concept of a fractional list-assignment as discussed in Section~\ref{fractional list section}.  It is inspired by a proof of the Caro-Wei Theorem due to Griggs~\cite{G83}.
\begin{proof}[Third Proof of Theorem~\ref{caro wei theorem}]
  Let $G$ be a minimum counterexample, that is a graph with the fewest number of vertices having no $f$-coloring where $f$ is a demand function satisfying $f(v) \leq 1/(d(v) + 1)$ for each $v$.  Let $v\in V(G)$ such that $f(v)$ is minimum, and let $\phi(v) \subseteq [0, 1]$ have measure at least $f(v)$.  Let $f'$ be the demand function for $G - v$ such that for each $u\in V(G - v)$, we have $f'(u) = 1 / (d_{G - v}(u) + 1)$.  Since $G$ is a minimum counterexample, $G - v$ has an $f'$-coloring.  Note that for each $u\in N(v)$,
  \begin{equation*}
    f'(u)\meas{L_{\phi}(v)} = \frac{1 - f(v)}{d_{G - v}(u) + 1} \geq \frac{1 - f(u)}{d_G(u)} \geq \frac{1 - 1/(d_G(u) + 1)}{d_G(u)} = \frac{1}{d_G(u) + 1} \geq f(u).
  \end{equation*}
  Therefore by Lemma~\ref{list coloring lemma}, $G - v$ has an $(f, L_\phi)$-coloring, contradicting Proposition~\ref{extending partial coloring}.
\end{proof}

\section{Triangle-free graphs and graphs with small cliques}\label{tri-free section}

In this section, we discuss fractionally coloring triangle-free graphs and graphs with small clique number.  In Section~\ref{tri-free chi_f section}, we show that Conjecture~\ref{local tri-free} implies a recent conjecture of Cames van Batenburg et al.~\cite{CvBdJdVKP18} on the fractional chromatic number of triangle-free graphs.  In Section~\ref{tri-free approximate section}, we prove Theorems~\ref{approximate tri-free},~\ref{approximate Kr}, and~\ref{local low omega}.  We actually prove Theorem~\ref{approximate local demands version}, a more general result that can function as a blackbox to obtain a result about fractional coloring with local demands using a bound on the chromatic number, and we use this result in conjunction with the recent results of Molloy~\cite{M17}  to prove Theorems~\ref{approximate tri-free} and~\ref{approximate Kr}, respectively.  We use Theorem~\ref{approximate local demands version} in conjunction with the recent bound of Bonamy et al.~\cite[Theorem 1.6]{BKNP18} to obtain Theorem~\ref{local low omega}.  We refine this approach in Section~\ref{fractional color degree section} and reduce Conjecture~\ref{local tri-free} to Conjecture~\ref{local color degree conjecture}, a ``list-local'' version of Molloy`s result on coloring triangle-free graphs involving ``color degrees.''

\subsection{The fractional chromatic number of triangle-free graphs}\label{tri-free chi_f section}

Beyond being independently interesting, Conjecture~\ref{local tri-free} also has theoretical applications.  Recently, Cames van Batenburg et al.~\cite[Conjecture 4.3]{CvBdJdVKP18} conjectured the following.

\begin{conjecture}[Cames van Batenburg et al.~\cite{CvBdJdVKP18}]\label{tri-free chi_f conjecture}
  If $G$ is a triangle-free graph on $n$ vertices, then $\chi_f(G) \leq (\sqrt{2} + o(1))\sqrt{n/\log n}$.
\end{conjecture}

The fact that the Ramsey number $R(3, k)$ is at most $(1 + o(1))k^2/\log k$ is equivalent to the following: if $G$ is a triangle-free graph on $n$ vertices, then $n / \alpha(G) \leq (\sqrt 2 + o(1))\sqrt{n / \log n}$.  Conjecture~\ref{tri-free chi_f conjecture} is an attempt to generalize this bound to fractional coloring.  In the same way that Shearer's~\cite{Sh95} bound on the independence number implies the bound on $R(3, k)$, Conjecture~\ref{local tri-free}, if true implies Conjecture~\ref{tri-free chi_f conjecture}, as follows.

\begin{proposition}\label{local tri-free implication proposition}
  For every $\varepsilon, c > 0$, the following holds for sufficiently large $n$.  Let $G$ be a triangle-free graph on $n$ vertices with demand function $f$ such that $f(v) \geq c\log d(v)/d(v)$ for each $v\in V(G)$.  If $G$ has an $f$-coloring, then $\chi_f(G) \leq (\sqrt {2/c} + \varepsilon)\sqrt{n/\log n}$.
\end{proposition}
\begin{proof}
  We assume without loss of generality that every vertex of $G$ has degree greater than $(\sqrt {2/c} + \varepsilon)\sqrt{n/\log n} - 1$ because if $G$ has a vertex $v$ of degree at most this quantity, then it would suffice to consider $G - v$ instead, since $\chi_f(G) \leq \max\{\chi_f(G - v), (\sqrt {2/c} + \varepsilon)\sqrt{n/\log n}\}$.  In particular, every vertex of $G$ has degree sufficiently large with respect to $\varepsilon$ and $c$.
  
  Let $g$ be the demand function for $G$ where $g(v) = d(v)/n$ for each $v\in V(G)$.  Since $G$ is triangle-free, it has a $g$-coloring $\phi_1$, by assigning all vertices in the neighborhood of each vertex an interval of measure $1/n$ that is disjoint from the others.  If $G$ has an $f$-coloring $\phi_2$, then by combining the two colorings (more precisely, by letting $\phi(v) = \{x \in [0, 1/2) : 2x \in \phi_1(v)\} \cup \{x \in [1/2, 1) : 2x - 1 \in \phi_2(v)\}$), we obtain a fractional coloring $\phi$ of $G$ such that each vertex $v$ receives at least
  \begin{equation*}
    \frac{1}{2}\left(\frac{c\log d(v)}{d(v)} + \frac{d(v)}{n}\right)
  \end{equation*}
  color.  

  We claim that every $v \in V(G)$ satisfies
  \begin{equation*}
    \frac{1}{2}\left(\frac{c\log d(v)}{d(v)} + \frac{d(v)}{n}\right) \geq \left(\frac{1}{\sqrt{{2}/{c}} + \varepsilon}\right)\sqrt{\frac{\log n}{n}},
  \end{equation*}
  which completes the proof.  
  Note that   
  \begin{equation*}
    \frac{\partial}{\partial d}\left(\frac{c\log d}{d} + \frac{d}{n}\right) = \frac{c(1 - \log d)}{d^2} + \frac{1}{n}
  \end{equation*}
  and that the function $d \mapsto d^2 / (\log(d) - 1)$ is continuous and increasing for $d > e^{3/2}$.
  Hence, since $n$ is sufficiently large with respect to $c$, there is a unique number $x$ in the interval $(e^{3/2}, n)$ satisfying $x^2/(\log(x) - 1) = cn$, and each $v\in V(G)$ satisfies
  \begin{equation*}
    \frac{1}{2}\left(\frac{c\log d(v)}{d(v)} + \frac{d(v)}{n}\right) \geq \frac{1}{2}\left(\frac{c\log x}{x} + \frac{x}{n}\right).
  \end{equation*}
  From here, the intuition is that $x \approx \sqrt{cn\log n / 2}$, so the right side of the inequality above is roughly
  \begin{multline*}
    \frac{1}{2}\left(\frac{c\log\sqrt{cn\log n / 2}}{\sqrt{cn \log n / 2}} + \frac{\sqrt{cn \log n / 2}}{n}\right) \approx \frac{1}{2}\left(\frac{\sqrt{c}\log n/2}{\sqrt{n\log n / 2}} + \sqrt{\frac{c\log n}{2 n}}\right)\\ = \frac{1}{2}\left(\sqrt{\frac{c\log n}{2n}} + \sqrt{\frac{c\log n}{2n}}\right) = \sqrt{\frac{c\log n}{2n}},
  \end{multline*}
  as required.  More precisely, let $x_1 = \sqrt{cn\log n / 2}$, let $x_2 = (1 + \varepsilon\sqrt{c / 2})\sqrt{cn\log n / 2}$, and note that $x_1 < x < x_2$.  Since the function $d \mapsto c \log d / d$ is decreasing and the function $d \mapsto d / n$ is increasing, we have
  \begin{multline*}
    \frac{c\log x}{x} + \frac{x}{n} > \frac{c \log x_2}{x_2} + \frac{x_1}{n} > \frac{c \log(\sqrt{cn\log n/2})}{(1 + \varepsilon\sqrt{c/2})\sqrt{cn\log n/2}} + \sqrt{\frac{c\log n}{2n}}\\
    > \left(\frac{1}{1 + \varepsilon \sqrt{c / 2}}\right)\sqrt{\frac{c}{2}}\frac{\log(cn\log n/2)}{\sqrt{n\log n}} + \sqrt{\frac{c\log n}{2n}} > \left(\frac{2}{1 + \varepsilon \sqrt{ c / 2}}\right)\sqrt{\frac{c \log n}{2n}} = \left(\frac{2}{\sqrt{2/c} + \varepsilon}\right)\sqrt{\frac{\log n}{n}},
  \end{multline*}
  and the result follows.
\end{proof}

Cames van Batenburg et al.~\cite{CvBdJdVKP18} proved a weaker form of their conjecture with the $\sqrt 2$ replaced with a 2.  Using Proposition~\ref{local tri-free implication proposition}, this result can be improved by proving a weaker form of Conjecture~\ref{local tri-free} in which the demands for each vertex are within a factor less than two of the conjectured value.

Conjecture~\ref{local tri-free} appears very similar to the following conjecture of Harris~\cite[Conjecture~6.2]{H19}, although they are incomparable.  A graph is \textit{$d$-degenerate} if every subgraph contains a vertex of degree at most $d$.
\begin{conjecture}[Harris~\cite{H19}]\label{harris tri-free}
  If $G$ is a $d$-degenerate triangle-free graph, then $\chi_f(G) = O(d/\log d)$.
\end{conjecture}

Note that if $G$ is $d$-degenerate, then $\mad(G) \leq 2d$, where $\mad(G)$ is the maximum taken over average degrees of all subgraphs of $G$, and if $\mad(G) \leq d$, then $G$ is $d$-degenerate.  Therefore Conjecture~\ref{harris tri-free} is equivalent to the following: every triangle-free graph $G$ satisfies $\chi_f(G) = O(\mad(G) / \log\mad(G))$.  
Thus, Conjecture~\ref{harris tri-free} is the fractional coloring analogue of Shearer's~\cite{Sh83} bound on the independence number of triangle-free graphs in terms of the average degree, while Conjecture~\ref{local tri-free} is the fractional coloring analogue of Shearer's~\cite{Sh91} bound on the independence number of triangle-free graphs in terms of the degree sequence.  Although the later result of Shearer~\cite{Sh91} implies the earlier one~\cite{Sh83} using Jensen's Inequality, this implication does not hold in the fractional coloring setting.  Nevertheless, we believe progress towards one of these two conjectures should provide insight into the other.

Conjecture~\ref{harris tri-free} does not hold for the chromatic number.  As shown by Kostochka and Ne\v{s}et\v{r}il~\cite{KN99}, Tutte~\cite{D54} (under the pseudonym Blanche Descartes) provided a construction of $d$-degenerate triangle-free graphs with chromatic number $d + 1$.

Esperet, Kang, and {Thomass{\'e} \cite{EKT18} recently conjectured that every triangle-free graph with minimum degree $d$ contains a bipartite induced subgraph of minimum degree at least $\Omega(\log d)$, and they showed that Harris' conjecture, if true, implies their conjecture.
  
\subsection{Approximate versions of Conjectures~\ref{local tri-free} and~\ref{local K_r-free} and Theorem~\ref{local low omega}}\label{tri-free approximate section}

In this section, we prove Theorems~\ref{approximate tri-free},~\ref{approximate Kr}, and~\ref{local low omega} using the recent results of Molloy~\cite{M17}} and Bonamy et al.~\cite{BKNP18}.  First, we prove Theorem~\ref{approximate local demands version}, which we then use as a ``black box.''

We prove this theorem by partitioning the vertices of $G$ and coloring each part in turn using the assumption that $\chi(H) \leq \Delta / r(\Delta)$.  To that end, we introduce the following definition.

\begin{definition}
  Let $H$ be a graph with demand function $f$, and let $\varepsilon > 0$.  We say $H$ is a \textit{ladder with rungs $R_0, \dots, R_k$} with respect to $f$ and $\varepsilon$ if $(R_0, \dots, R_k)$ is a partition of $V(H)$ such that for every $i\in\{0, \dots, k - 1\}$, if $v\in R_i$ and $u\in R_j$ for $j > i$, then $f(u) \leq \varepsilon / d(v)$.  
\end{definition}

If $H$ is a ladder with rungs $R_0, \dots, R_k$, then we refer to $R_0$ as the \textit{bottom rung} and $R_k$ as the \textit{top rung}.  In our proofs, we choose the rungs based on the degrees of vertices.  Essentially, the bottom rung will contain the vertices of minimum degree, and the top rung will contain the vertices of maximum degree.  

\begin{lemma}\label{ladder coloring lemma}
  If $H$ is a ladder with rungs $R_0, \dots, R_k$ with respect to a demand function $f$ and $\varepsilon$  such that $H[R_i]$ has an $f/(1 - \varepsilon)$-coloring for each $i\in\{0, \dots, k\}$, then $H$ has an $f$-coloring.
\end{lemma}

\begin{proof}
Let $\phi$ be an $f$-coloring of $H[S]$ for $S\subseteq V(H)$ maximal subject to the following: If $R_i\cap S \neq \varnothing$, then $R_i\subseteq S$, and if $R_i\subseteq S$ for $i < k$, then $R_{i+1} \subseteq S$.  That is, $\phi$ is an $f$-coloring of as many rungs of $H$ as possible, from top to bottom.  The empty set satisfies this condition, so such an $S$ exists.  Suppose for a contradiction that $S\neq V(H)$.  Let $i$ be maximum such that $R_i\cap S = \varnothing$, and let $v\in R_i$.  Since $H$ is a ladder with respect to $f$ and $\varepsilon$, the vertex $v$ sees at most $\sum_{u\in N(v)\cap \cup_{j=i+1}^k R_j}f(u) \leq \varepsilon$ color from $\phi$.  By assumption, $H[R_i]$ has a fractional coloring in which each vertex receives at least $f(v)/(1 - \varepsilon)$ color, so by Lemma~\ref{list coloring lemma}, $\phi$ can be extended to an $f$-coloring of $H[R_i]$, contradicting the choice of $S$.  Therefore $H$ has an $f$-coloring, as desired.
\end{proof}

In the proof of Theorem~\ref{approximate local demands version}, we partition the vertices of $G$ into two ladders and color each with half of the available color.  We apply Lemma~\ref{ladder coloring lemma} to both of these ladders to obtain this coloring.  In the next section, we also use Lemma~\ref{ladder coloring lemma} in a similar way.

In order to apply Lemma~\ref{ladder coloring lemma}, we need to show that each rung in one of our ladders has the desired coloring.  We need the following definition.

\begin{definition}
  Let $H\subseteq G$ be a ladder with rungs $R_0, \dots, R_k$.  A rung $R$ is \textit{$(r, M)$-stratified by $S_1, \dots, S_M$} if $(S_1, \dots, S_M)$ is a partition of $R$ such that for every $i\in\{1, \dots, M\}$, if $u, v\in S_i$, then $d_G(u) \leq d_G(v)\cdot r(d_G(v))^{1/M}$.
\end{definition}

We use the following lemma to color each rung of the ladders in the proof of Theorem~\ref{approximate local demands version}.

\begin{lemma}\label{coloring stratified lemma}
  Suppose a rung $R$ of a ladder $H\subseteq G$ is $(r, M)$-stratified by $S_1, \dots, S_M$ such that for every $i\in\{1, \dots, M\}$ and every $\Delta$, if $\max_{v\in S_i}d_G(v) \leq \Delta$, then $\chi_f(G[S_i]) \leq \Delta/r(\Delta)$.  If $g : \mathbb R\rightarrow \mathbb R$ satisfies
  \begin{equation}\label{gets enough color}
    g(x) \leq \frac{r(x)}{xM\cdot r(x)^{1/M}}
  \end{equation}
  and $f$ is a demand function for $G$ where $f(v) = g(d(v))$ for each vertex $v$, then the graph $G[R]$ has an $f$-coloring.
\end{lemma}
\begin{proof}
We color each graph $G[S_i]$ separately with disjoint sets of color of measure $1/M$.  For each $i\in\{1, \dots, M\}$, let $\Delta_i = \max_{v\in S_i}d_G(v)$.  By assumption, for each $i \in \{1, \dots, M\}$, we have $\chi_f(G[S_i]) \leq \Delta_i/ r(\Delta_i)$.  Therefore for each $i\in\{1, \dots, M\}$ there is a fractional coloring $\phi_i$ of $G[S_i]$ such that each vertex in $S_i$ receives at least $r(\Delta_i)/(M\Delta_i)$ color and the range of $\phi_i$ and $\phi_j$ is disjoint if $i\neq j$.  Since $R$ is $(r, M)$-stratified, for each $i\in \{1, \dots, M\}$ and $v\in S_i$, we have $\Delta_i \leq d(v)\cdot r(d(v))^{1/M}$.  Since $r$ is nondecreasing, each vertex $v$ receives at least $r(d(v)) / (M d(v)\cdot r(d(v))^{1/M})$ color.  By~\eqref{gets enough color}, $v$ receives at least $g(d(v))$ color, as desired.
\end{proof}

Combining Lemmas~\ref{ladder coloring lemma} and~\ref{coloring stratified lemma}, we now prove Theorem~\ref{approximate local demands version}.

\begin{proof}[Proof of Theorem~\ref{approximate local demands version}]
  We may assume that every vertex in $G$ has degree at least $\delta^{-1} - 1$, since $f(v) \leq \delta$ for every $v\in V(G)$.  We do not explicitly compute $\delta$; instead, we assume $\delta > 0$ is sufficiently small to satisfy certain inequalities throughout the proof.  In particular, every vertex of $G$ has arbitrarily large degree.

  Let $M_0, M_1, \dots$ and $M'_0, M'_1, \dots$ be nondecreasing sequences of positive integers, and let $\delta_{i,j}$ for $i\geq0$ and $j \in \{0, \dots, M_i\}$ and $\delta'_{i,j}$ for $i \geq 0$ and $j \in \{0, \dots, M'_i\}$ be positive reals to be determined later.  We partition the vertices of $G$ into two parts that induce ladders $H$ and $H'$ with rungs $R_0, \dots, R_k$ and $R'_0, \dots, R'_{k}$ such that the rungs $R_i$ and $R'_i$ are $(r, M_i)$-stratified and $(r, M'_i)$-stratified, respectively, as follows.  Whether a vertex is in $H$ or in $H'$, and its rung in the corresponding ladder, is determined by its degree with respect to the $\delta_{i,j}$ and $\delta'_{i,j}$.

  Let $\delta_{0, 0}$ be the minimum degree of a vertex in $G$, and define $\delta'_{i, j}$ for $i\geq 0$ and $j \in \{0, \dots, M'_i\}$ and $\delta_{i, j}$ for $i\geq 0$ and $j \in \{0, \dots, M_j\}$ (and $(i,j) \neq (0,0)$) as follows.  For each $i\geq 0$ and each $j\in\{1, \dots, M_i\}$, let $\delta_{i, j} = \delta_{i, j-1}\cdot r(\delta_{i, j - 1})^{1/M_i}$, and similarly, for each $i\geq 0$ and each $j\in \{1, \dots, M'_i\}$, let $\delta'_{i, j} = \delta'_{i, j-1}\cdot r(\delta_{i, j - 1})^{1/M'_i}$.  For each $i\geq 0$, let $\delta'_{i, 0} = \delta_{i, M_i}$, and for each $i\geq 1$, let $\delta_{i, 0} = \delta'_{i - 1, M'_{i - 1}}$.  Note that the constants $\delta_{i,j}$ and $\delta'_{i,j}$ are well-defined and satisfy
  \begin{equation*}
    \delta_{0,0} < \cdots < \delta_{0,M_0} = \delta'_{0,0} <  \cdots < \delta'_{0, M'_0} = \delta_{1, 0} < \cdots < \delta_{1, M_1} = \delta'_{1,0} <  \cdots < \delta'_{1, M'_1} = \delta_{2,0} <  \cdots
  \end{equation*}

  For each $i \geq 0$ and $j \in \{1, \dots, M_i\}$, let $S_{i, j}$ be the set of vertices of $G$ with degree at least $\delta_{i, j - 1}$ and less than $\delta_{i, j}$, and for each $i \geq 0$ and $j \in \{1, \dots, M'_i\}$, let $S'_{i, j}$ be the set of vertices of $G$ with degree at least $\delta'_{i, j - 1}$ and less than $\delta'_{i, j}$.  
  For each $i\geq 0$, let $R_i$ be the set of vertices of $G$ with degree at least $\delta_{i, 0}$ and less than $\delta'_{i, 0}$, and let $R'_i$ be the set of vertices of $G$ with degree at least $\delta'_{i, 0}$ and at most $\delta_{i + 1, 0}$.
  Let $H$ be the graph induced by $G$ on $\cup R_i$, and let $H'$ be the graph induced by $G$ on $\cup R'_i$.  Since $G$ is finite, we let $k$ be the largest integer such that there is a vertex in $R_k\cup R'_k$.  We may assume without loss of generality that there is a vertex in $R'_k$.

  Since $r$ is nondecreasing and greater than 1, for each $i\in\{0, \dots, k\}$, $\delta'_{i, 0} > \delta_{i, 0}\cdot r(\delta_{i, 0})$.  Similarly, for each $i\geq \{1, \dots, k\}$, $\delta_{i, 0} > \delta'_{i - 1, 0}$.  Therefore for each $i\in \{0, \dots, k\}$, the graphs $H[R_i]$ and $H[R'_i]$ are $(r, M_i)$-stratified and $(r, M'_i)$-stratified by $(S_{i, 1}, \dots, S_{i, M_i})$ and $(S'_{i, 1}, \dots, S'_{i, M'_i})$, respectively.  For each $i\in \{0, \dots, k\}$, let $g_i(x) = \frac{r(x)}{xM_i\cdot r(x)^{1/M_i}}$, and let $g'_i(x) = \frac{r(x)}{xM'_i\cdot r(x)^{1/M'_i}}$.  Let $g$ and $g'$ be demand functions for $H$ and $H'$ respectively, where $g(v) = (1 - \varepsilon/2)g_i(d(v))$ if $v\in R_i$ and $g'(v) = (1 - \varepsilon/2)g'_i(d(v))$ if $v\in R'_i$.  We claim that $H$ is a ladder with rungs $(R_0, \dots, R_k)$ with respect to $g$ and $\varepsilon/2$ and $H'$ is a ladder with rungs $(R'_0, \dots, R'_k)$ with respect to $g'$ and $\varepsilon/2$.  To that end, let $v\in R_i$.  For each $u\in \cup_{j=i+1}^k R_j$, we have
  \begin{equation*}
    g(u) \leq \frac{r(d(v)\cdot r(d(v)))}{d(v)r(d(v))M_j\cdot r(d(v)\cdot r(d(v)))^{1/M_j}}.
  \end{equation*}
  Note that for any $x$, the function $M \cdot x^{1/M}$ is maximized when $M = \log x$ and is thus at most $e\log x$.  Letting $M = M_j$ and $x = r(d(v)\cdot r(d(v)))$, we have for each $u\in R_j$ for $j > i$,
  \begin{equation*}
    g(u) \leq \frac{r(d(v)\cdot r(d(v)))}{d(v)r(d(v))e\log (r(d(v)\cdot r(d(v))))}.
  \end{equation*}  
  Therefore by \eqref{r constraint}, we may assume the right side of the previous inequality is at most $\varepsilon/(2d(v))$, as required.  Hence, $H$ is a ladder as claimed, and the proof for $H'$ is the same.

  Let $f$ be the demand function for $G$ where $f(v) = g(v)/2$ if $v\in R_i$ and $f(v) = g'(v)/2$ if $v\in R'_i$.  Note that for $v\in R_i$, since $r$ is nondecreasing, $f(v) \geq \frac{(1 - \varepsilon/2)r(d(v))}{2d(v)M_i\cdot r(\delta'_{i, 0})^{1/M_i}}$.  Letting $M_i = \lceil\log(\delta'_{i, 0})\rceil$ and $M'_i = \lceil\log(\delta_{i + 1, 0})\rceil$, assuming $\varepsilon < 1$, we have $f(v) \geq (2e + \varepsilon)^{-1}\frac{r(d(v))}{d(v)\log(r(\delta'_{i, 0}))}$.  Since $d(v)r(d(v)) \geq \delta'_{i, 0}$, we have
  \begin{equation*}
    f(v) \geq (2e + \varepsilon)^{-1}\frac{r(d(v))}{d(v)\log(r(d(v)\cdot r(d(v))))}.
  \end{equation*}
  
  Therefore it suffices to show that $G$ has an $f$-coloring.  If $H$ and $H'$ have $g$ and $g'$-colorings respectively, then $G$ has an $f$-coloring, obtained by averaging $g$ and $g'$.  Thus, we only need to show that $H$ has a $g$-coloring, since the proof that $H'$ has a $g'$-coloring is the same.  By Lemma~\ref{coloring stratified lemma}, $H[R_i]$ has a fractional coloring in which each vertex receives at least $g_i(d(v))$ color, so by Lemma~\ref{ladder coloring lemma}, $H$ has a $g$-coloring, as required. 
\end{proof}

Now we show how to obtain Theorems~\ref{approximate tri-free},~\ref{approximate Kr}, and~\ref{local low omega} using Theorem~\ref{approximate local demands version}.

\begin{proof}[Proof of Theorem~\ref{approximate tri-free}]
  For each $d\in\mathbb N$, let $r(d) = (1 + o(1))^{-1}\log d$, and note that $r$ satisfies~\eqref{r constraint} and is increasing, as required.  Since $G$ is a triangle-free graph, every subgraph $H\subseteq G$ of maximum degree at most $\Delta$ satisfies $\chi(H) \leq \Delta/r(\Delta)$ by~\cite[Theorem~1]{M17}, as required.  Since
  \begin{equation*}
    \frac{r(d)}{d\log r(d\cdot r(d))} = (1 + o(1))^{-1}\frac{\log d}{d\log\log d},
  \end{equation*}
  the result follows from Theorem~\ref{approximate local demands version}.
\end{proof}

\begin{proof}[Proof of Theorem~\ref{approximate Kr}]
  For each $d\in\mathbb N$, let $r(d) = \log d / (200\omega \log\log d)$, and note that $r$ satisfies~\eqref{r constraint} and is increasing, as required.  Since $G$ has clique number less than $r$, every subgraph $H\subseteq G$ of maximum degree at most $\Delta$ satisfies $\chi(H) \leq \Delta/r(\Delta)$ by~\cite[Theorem~2]{M17}, as required.  Since
  \begin{equation*}
    \frac{r(d)}{d\log r(d\cdot r(d))} = O\left(\frac{\log d}{d(\log\log d)^2}\right),
  \end{equation*}
  the result follows from Theorem~\ref{approximate local demands version}.
\end{proof}

\begin{proof}[Proof of Theorem~\ref{local low omega}]
  For each $d\in\mathbb N$, let $r(d) = 6c\log c$, and note that $r$ satisfies~\eqref{r constraint} with $\varepsilon = 1/(6e)$ and is nondecreasing, as required.  Since $\omega(v) \leq d(v)^{1/(432c\log c)^2}$ for each vertex $v$, every subgraph $H\subseteq G$ such that $\max_{v\in V(H)}d_G(v) \leq \Delta$ satisfies $\omega(H) \leq \Delta^{1/(432c\log c)^2}$, and thus by~\cite[Theorem~1.6]{BKNP18}, $\chi(H) \leq \Delta/r(\Delta)$, as required.  Since $c \geq e^6$, we have
  \begin{equation*}
    \left(2e + \frac{1}{6e}\right)^{-1}\frac{r(d)}{d\log r(d\cdot r(d))} = \left(\frac{6}{2e + 1/(6e)}\right)\frac{c\log c}{d(\log 6 + \log c - \log\log c)} \geq \frac{c}{d},
  \end{equation*}
  and thus the result follows from Theorem~\ref{approximate local demands version}.
\end{proof}

\subsection{Color degrees}\label{fractional color degree section}

In this section, we reduce Conjecture~\ref{local tri-free} to a problem involving list coloring and \textit{color degrees}.  We need the following definition.

\begin{definition}
  Let $G$ be a graph with list-assignment $L$.
  \begin{itemize}
  \item For each $v\in V(G)$ and $c\in L(v)$, the \textit{color degree} of $v$ and $c$, denoted $d_{G, L}(v, c)$, is the number of neighbors $u\in N(v)$ such that $c\in L(u)$.
  \item We let $\Delta(G, L)$ denote the \textit{maximum color degree}, the maximum over $v\in V(G)$ and $c\in L(v)$ of $d(v, c)$.
  \end{itemize}
\end{definition}

In 1999, Reed~\cite{R99-2} conjectured that if $G$ is a graph with list-assignment $L$ such that for each $v\in V(G)$, the number $|L(v)|$ of available colors for $v$ is at least $\Delta(G, L) + 1$, then $G$ is $L$-colorable.  In some sense, this conjecture is a ``color degree'' analogue of the greedy bound on the chromatic number.  Although, Bohman and Holzman~\cite{BH02} disproved this conjecture, Reed and Sudakov~\cite{RS02} showed instead that if the number $|L(v)|$ of available colors is at least $(1 + o(1))\Delta(G, L)$ for each $v$, then the graph is $L$-colorable.

Before Johansson~\cite{J96-tri} proved that triangle-free graphs have chromatic number $O(\Delta / \log \Delta)$, Kim~\cite{K95} proved that graphs of girth at least five have list chromatic number at most $(1 + o(1))\Delta / \log\Delta$.  Molloy~\cite{M17} strengthened this result by showing that it also holds for triangle-free graphs; however, Molloy and Reed~\cite{MR02} provided a simpler proof of Kim's result that actually holds when $\Delta$ is replaced by the maximum color degree.  Pettie and Su~\cite{PS15} proved a $(4 + o(1))\Delta / \log\Delta$-bound on the list chromatic number of triangle-free graphs, and their proof also holds when $\Delta$ is replaced by $\Delta(G, L)$.  Molloy's proof does not work for color degrees, although we believe the bound is true.  In fact, we make the following more general conjecture.

\begin{conjecture}\label{local color degree conjecture}
  For every $\varepsilon > 0$, there exists $\alpha > 0$ and $\Delta$ sufficiently large such that the following holds.  If $G$ is a triangle-free graph with list-assignment $L$ such that each $v\in V(G)$ satisfies $|L(v)| \geq \Delta^{1-\alpha}$ and for each $c\in L(v)$ we have
  \begin{equation*}
    |L(v)| \geq (1 + \varepsilon)\frac{d(v, c)}{\log d(v, c)}
  \end{equation*}
  and $d(v, c) \leq \Delta$, then $G$ is $L$-colorable.
\end{conjecture}

The main result of this subsection is that Conjecture~\ref{local color degree conjecture}, if true, implies Conjecture~\ref{local tri-free}.

Conjecture~\ref{local color degree conjecture} generalizes Molloy's~\cite{M17} bound on the list chromatic number of triangle-free graphs in two ways: first, it only requires the number of available colors for each vertex to depend on a local parameter, rather than a global one (within the range $[\Delta^{1-\alpha}, \Delta]$), and second, it replaces degrees with color degrees.  Bonamy et al.~\cite{BKNP18} and Davies et al.~\cite{DdJdVKP18} proved variations of Molloy's result of the first type.  As mentioned, Molloy and Reed's~\cite{MR02} proof of Kim's~\cite{K95} bound for girth five graphs and Pettie and Su's~\cite{PS15} bound for triangle-free graphs can be adapted to be of the second type.
A result of Davies et al.~\cite[Proposition~11]{DdJdVKP18} implies that some lower bound on either the color degrees or list sizes in terms of $\Delta$ in Conjecture~\ref{local color degree conjecture} is necessary; however, we believe that the $\Delta^{1-\alpha}$-bound could be lowered to $\mathrm{poly}\log\Delta$.

In the remainder of the section, we show that Conjecture~\ref{local color degree conjecture}, if true, implies Conjecture~\ref{local tri-free}.  The following definition is crucial.

\begin{definition}
  Let $G$ be a graph with demand function $f$, and let $N$ be a common denominator for $f$ and $f/(1 + f)$.  If $G'$ is a graph obtained from $G$ by replacing each vertex $v$ with an independent set $\{v_1, \dots, v_{f(v)\cdot N/(1 + f(v))}\}$ and if $L$ is a list-assignment for $G'$ such that the lists $L(v_i)$ partition $\{1, \dots, N\}$ and are each of size at least $\lceil f(v)^{-1}\rceil$, then $(G, L)$ is an \textit{$N$-color-partitioned blowup} of $G$ with respect to $f$ and the vertex $v$ is the \textit{progenitor} of the vertices in $\{v_1, \dots, v_{f(v)\cdot N/(1 + f(v))}\}$.
\end{definition}

Note that in this definition, every vertex $v\in V(G)$ satisfies $\lceil f(v)^{-1}\rceil \cdot f(v) \cdot N/(1 + f(v)) \leq N$.  Hence, $N$-color-partitioned blowups of $G$ indeed exist for every such $N$.

The following proposition reveals the connection between color degrees and fractional coloring, using the notion of the $N$-color-partitioned blowup.

\begin{proposition}\label{blowup color degree prop}
  Let $G$ be a graph with demand function $f$, and let $(G', L)$ be an $N$-color-partitioned blowup of $G$ with respect to $f$.
  \begin{enumerate}[(a)]
  \item\label{blowup color degree L-col} If $G'$ is $L$-colorable, then $G$ has an $f/(1 + f)$-coloring, and
  \item\label{blowup color degree progenitor} for each $v' \in V(G')$ and $c\in L(v)$, if $v\in V(G)$ is the progenitor of $v'$, then $d(v', c) = d(v)$.
  \end{enumerate}
\end{proposition}
\begin{proof}
  Let $\phi$ be an $L$-coloring of $G'$.  For each $v\in V(G)$, let
  \begin{equation*}
    \psi(v) = \cup_{i\in\{1, \dots, f(v)\cdot N/(1 + f(v))\}}\phi(v_i),
  \end{equation*}
  where $v$ is the progenitor for each vertex in $\{v_1, \dots, v_{f(v)\cdot N/(1 + f(v))}\}$.  Since the lists $L(v_i)$ partition $\{1, \dots, N\}$, the colors $\phi(v_i)$ are distinct.  Hence, $|\psi(v)| = f(v)\cdot N/(1 + f(v))$.  Moreover, $\psi(v)\cap \psi(u) = \varnothing$ for every $uv\in E(G)$.  Therefore $\psi$ is an $(f/(1 + f), N)$-coloring of $G$.  By Theorem~\ref{equivalent definitions}, $G$ has an $f/(1 + f)$-coloring, as desired.

  Since the lists $L(v_i)$ partition $\{1, \dots, N\}$, for every $c\in L(v)$ and $u\in N(v)$, there is precisely one vertex $u'$ of which $u$ is the progenitor such that $c\in L(u')$.  Therefore $d(v', c) = d(v)$, as claimed.
\end{proof}

Let $r : \mathbb N \rightarrow \mathbb R$ and $a, b \in \mathbb R$.  We say a graph $G$ is \textit{$r$-locally color-degree list-colorable for color-degrees in $[a,b]$} if $G$ is $L$-colorable for any list-assignment $L$ such that each $v\in V(G)$ and $c\in L(v)$ satisfies
\begin{equation*}
  |L(v)| \geq d(v, c) / r(d(v, c))
\end{equation*}
and $a \leq d(v, c) \leq b$.  A class of graphs $\mathcal G$ is \textit{closed under duplicating vertices} if for every $G\in\mathcal G$ and $v\in V(G)$, the graph obtained from $G$ by adding a vertex adjacent to each vertex in $N(v)$ is in $\mathcal G$.  Note that if $\mathcal G$ is closed under duplicating vertices, $G\in\mathcal G$, and $(G', L)$ is an $N$-color-partitioned blowup of $G$, then $G'\in\mathcal G$.

The following theorem is the main result of this subsection.
\begin{theorem}\label{color degree reduction theorem}
  Let $r : \mathbb R \rightarrow \mathbb R$ be increasing and tending to infinity such that for any $\varepsilon > 0$, if $x$ is sufficiently large, then $r(3\varepsilon^{-1}x\cdot r(x)) \leq (1 - \varepsilon)3r(x)/2$ and $r(x)^{1/\varepsilon} = o(x^\alpha)$ for every $\alpha > 0$.
  Let $\mathcal G$ be a hereditary class of graphs closed under duplicating vertices such that for some $\alpha > 0$ and every sufficiently large $\delta$, every graph $G\in \mathcal G$ is $r$-locally color-degree list-colorable for color-degrees in $[\Delta^{1- \alpha}, \Delta]$.

  For every $\varepsilon > 0$, there exists $\delta > 0$ such that the following holds.  If $G\in \mathcal G$ has demand function $f$ such that
  \begin{equation*}
    f(v) \leq \min\left\{(1 + \varepsilon)^{-1}\frac{r(d(v))}{d(v)}, \delta\right\}
  \end{equation*}
  for each $v\in V(G)$, then $G$ has an $f$-coloring.
\end{theorem}
\begin{proof}
  It suffices to prove the result for $\varepsilon = 1 / n$ for $n\in\mathbb N $ sufficiently large.  We may assume that every vertex has degree at least $\delta^{-1} - 1$, since $f(v) \leq \delta$ for every $v\in V(G)$.  We choose $n$ sufficiently large and $\delta$ sufficiently small so that every vertex in $G$ has large enough degree to satisfy certain inequalities throughout the proof.  We separate the vertices of $G$ into $n$ different ladders such that each vertex is contained in $n - 1$ of them, as follows.
  
  Let $\delta_{0, 0} = \delta^{-1} - 1$.  For $i\geq 0$ and $j \in \{1, \dots, n\}$, let $\delta_{i, j} = 3\varepsilon^{-1}\delta_{i, j - 1}\cdot r(\delta_{i, j - 1})$, and for $i \geq 1$, let $\delta_{i, 0} = \delta_{i - 1, n}$.  For each $i \geq 0$ and $j\in\{1, \dots, n\}$, let $R_{i, j}$ be the set of vertices of $G$ with degree at least $\delta_{i, j}$ and less than $\delta_{i + 1, j - 1}$.  For each $j\in \{1, \dots, n\}$, let $H_j$ be the graph induced by $G$ on $\cup_i R_{i, j}$, and for each $i \geq 0$, let $H_{i, j}$ be the graph induced by $G$ on $R_{i, j}$.   Let $f' = (1 + \varepsilon)f$.  Using Lemma~\ref{ladder coloring lemma}, we will show that each graph $H_j$ has an $f'$-coloring.

  To that end, we claim that $H_\ell$ is a ladder with rungs $(R_{i, \ell})_{i\geq 0}$ with respect to $(1 + \varepsilon)f$ and $(1 - \varepsilon)\varepsilon/2$.  For each $v\in R_{j, \ell}$ and $u \in \cup_{i\geq j + 1}R_{i, \ell}$, since $r$ is increasing, by the choice of the $\delta_{i,j}$ and $R_{i,j}$ we have $d(u) > 3\varepsilon^{-1}d(v)\cdot r(d(v))$ and thus
  \begin{equation*}
    (1 + \varepsilon) f(u) \leq \frac{r(3\varepsilon^{-1}d(v)\cdot r(d(v)))}{3\varepsilon^{-1}d(v)\cdot r(d(v))}.
  \end{equation*}
  Since $r(3\varepsilon^{-1}x\cdot r(x)) \leq (1 - \varepsilon)3r(x)/2$, the right side of the previous inequality is at most $\varepsilon(1 - \varepsilon)/(2d(v))$.  Therefore $(1 + \varepsilon)f(u)/\leq \varepsilon(1 - \varepsilon)/(2d(v))$, as required.
  
  Next we will show that each $H_{i,j}$ has an $f' / (1 - (1 - \varepsilon)\varepsilon/2)$-coloring using Proposition~\ref{blowup color degree prop}.  To that end, Let $N$ be a common denominator for $f'$ and $f'/(1 + f')$, and for each $i\geq 0$ and $j\in\{1, \dots, n\}$, let $(H'_{i, j}, L_{i, j})$ be an $N$-color-partitioned blowup of $H_{i,j}$.  Since $\mathcal G$ is hereditary, each graph $H_j\in\mathcal G$, and since $\mathcal G$ is closed under taking blowups, each graph $H'_{i, j}\in\mathcal G$.  Note that each vertex $u \in V(H'_{i,j})$ with progenitor $v$ satisfies $|L(u)| \geq \lceil f'(v)^{-1}\rceil \geq d(v) / r(d(v))$. 

  We claim that for each $i\geq 0$ and each $j\in\{1, \dots, n\}$, we have that $\delta_{i+1, j - 1} \leq (3/2)^{\binom{n}{2}}(3\varepsilon^{-1}r(\delta_{i, j}))^{n - 1}\delta_{i, j}$.  We prove this claim for the case $j = n$, and the proof for other $j$ is the same.  Since $\delta_{i + 1, 0} = \delta_{i, n}$, it suffices to show that for each $i\geq 0$ we have $\delta_{i, n - 1} \leq (3/2)^{\binom{n}{2}}(3\varepsilon^{-1}r(\delta_{i, 0}))^n\delta_{i, 0}$.  We actually prove by induction the stronger statement that for each $i\geq 0$ and $\ell \in \{0, \dots, n - 1\}$, we have $\delta_{i, \ell} \leq (3/2)^{\binom{\ell+1}{2}}(3\varepsilon^{-1}r(\delta_{i, 0}))^{\ell}\delta_{i, 0}$.  Suppose this statement is true for $\ell' \leq \ell$, where $\ell \geq 0$.  By the definition of $\delta_{i, \ell+1}$ and the inductive hypothesis, we have
  \begin{equation}\label{delta inductive hypothesis}
    \delta_{i, \ell + 1} = 3\varepsilon^{-1}\delta_{i, \ell}\cdot r(\delta_{i, \ell}) \leq 3\varepsilon^{-1}(3/2)^{\binom{\ell}{2}}(3\varepsilon^{-1}r(\delta_{i, 0}))^{\ell} \delta_{i, 0}\cdot r(\delta_{i, \ell}).
  \end{equation}
  Since $r$ is increasing and $r$ satisfies $r(3\varepsilon^{-1}x\cdot r(x)) \leq 3r(x)/2$, assuming $\delta_{0, 0}^{-1}$ is sufficiently large, we have
  \begin{equation}\label{r delta induction}
    r(\delta_{i, \ell}) \leq (3/2)^\ell r(\delta_{i, 0})
  \end{equation}
  Combining~\eqref{delta inductive hypothesis} and~\eqref{r delta induction}, we have $\delta_{i, \ell + 1} \leq (3/2)^{\binom{\ell + 1}{2}}(3\varepsilon^{-1}r(\delta_{i, 0}))^\ell\cdot \delta_{i, 0}$, as required.  Since the case $\ell = 0$ trivially holds, the claim follows.

  Therefore, since $r(x)^{1/\varepsilon} = r(x)^n = o(x^\alpha)$, for $\delta^{-1}$ sufficiently large we have $\delta_{i + 1, j - 1} \leq \delta_{i, j}^{1 + \alpha}$.
  Since every graph in $\mathcal G$ is $r$-locally color-degree list-colorable for color degrees in $[\Delta^{1-\alpha}, \Delta]$, for each $j$, by Proposition~\ref{blowup color degree prop}\ref{blowup color degree progenitor}, we have that $H'_{i, j}$ is $L_{i, j}$-colorable for each $N$-color-partitioned blowup $(H'_j, L_j)$.  By Proposition~\ref{blowup color degree prop}\ref{blowup color degree L-col}, each graph $H_{i, j}$ has an $f'/(1 + f')$-coloring.  Since $f(v) \leq \delta$ for each vertex $v$, we may assume $f'/(1 + f') \geq f'/(1 + (1 + \varepsilon)\delta) \geq f'/(1 - (1 - \varepsilon)\varepsilon/2)$.  Therefore by applying Lemma~\ref{ladder coloring lemma} with $f'$ and $\varepsilon(1 - \varepsilon)/2$, each graph $H_j$ has a $(1 + \varepsilon)f$-coloring.  Since $\varepsilon = 1/n$ and each vertex is in $n - 1$ of the graphs $H_j$, averaging these $(1 + \varepsilon)f$-colorings yields an $f$-coloring of $G$, as desired.
\end{proof}

Note that the function $r(x) = \log x$ and the class $\mathcal G$ of triangle-free graphs satisfy the hypotheses of Theorem~\ref{color degree reduction theorem}.  Moreover, if for each $v\in V(G)$ and $c\in L(v)$ we have $d(v, c) \geq \Delta^{1-\alpha}$ and $|L(v)| \geq d(v, c) / \log d(v, c)$, then $|L(v)| \geq \Delta^{1 - 2\alpha}$ for sufficiently large $\Delta$.  Therefore by Theorem~\ref{color degree reduction theorem}, Conjecture~\ref{local color degree conjecture}, if true, implies Conjecture~\ref{local tri-free}.

It is possible that a variant of Conjecture~\ref{local color degree conjecture} holds for $K_r$-free graphs rather than triangle-free graphs if we increase the lower bound on the list size in terms of color degrees by a constant factor.  However, proving this is out of reach with our current techniques, even with color degrees replaced by maximum degree.  We may instead consider a list assignment $L$ in which each vertex $v$ and $c\in L(v)$ satisfies $|L(v)| = \Omega(d(v, c)\log\log d(v, c) / \log d(v, c))$.  Proving that the graph is $L$-colorable in this case would be a ``color degree'' version of Johansson's~\cite{J96-Kr} bound on the chromatic number of $K_r$-free graphs, and by Theorem~\ref{color degree reduction theorem}, it would imply a local demands version of Johansson's and Shearer's~\cite{Sh95} bounds.  Neither Johansson's nor Molloy's~\cite{M17} proof easily adapts for color degrees, though.

\section{$\chi$-boundedness}\label{chi-bounded section}

Recall that a class of graphs is $\chi$-bounded if the chromatic number of every induced subgraph of a graph in the class is bounded by a function of its clique number.  In this case, such a function is called a \textit{$\chi$-binding} function.  There is an enormous amount of research devoted to studying $\chi$-boundedness.  In the 1980s, Gy\'{a}rf\'{a}s~\cite{G87} posed several beautiful conjectures that were critical in popularizing the subject.  A \textit{hole} in a graph is an induced cycle of length at least four, and an \textit{antihole} is an induced subgraph whose complement is a hole.  The Strong Perfect Graph Theorem~\cite{CRST06} implies that graphs with no odd hole or odd antihole are $\chi$-bounded by the identity function.  At that time, it was not known if this family of graphs is $\chi$-bounded at all, so Gy\'{a}rf\'{a}s~\cite[Conjecture~3.2]{G87} proposed the ``Weakened Strong Perfect Graph Conjecture.''  He conjectured three strengthenings of this problem: that graphs with no odd holes, graphs with no long holes (that is, holes of length at least $\ell$, for any $\ell$), and graphs with no long odd holes are $\chi$-bounded.  Note that the third case includes the former two.  The first of these conjectures was recently proved by Scott and Seymour~\cite{SS16}, the second by Chudnovsky, Scott, and Seymour~\cite{CSS17}, and the third by Chudnovsky, Scott, Seymour, and Spirkl~\cite{CSSS17}.  Finally, Scott and Seymour~\cite{SS17} generalized all of these results by proving that graphs with no holes of a specific residue are $\chi$-bounded.

Gy\'{a}rf\'{a}s~\cite{G87} also asked for which graphs $H$ is the class of \textit{$H$-free} graphs, that is the graphs  with no induced subgraph isomorphic to $H$, $\chi$-bounded.  The Gy\'{a}rf\'{a}s-Sumner Conjecture~\cite{G75, S81} states that if $H$ is a tree, then the class of $H$-free graphs is $\chi$-bounded.  Note that since there exist graphs of arbitrarily large girth and chromatic number as proved by Erd\H os~\cite{E59}, it is necessary that $H$ does not contain a cycle.  The conjecture has been proved for various families of trees, but it remains open.

Finding the best possible $\chi$-binding functions for various graph classes has also been extensively studied.  In addition to classes of graphs characterized by excluded induced subgraphs, geometric graphs -- graphs described by the intersections of a collection of geometric objects -- have received much attention.  Each object corresponds to a vertex and vertices are adjacent if the corresponding objects intersect.  For example, Asplund and Gr\"{u}nbaum~\cite{AG60} proved that the family of intersection graphs of axis-parallel rectangles is $\chi$-bounded, and determining the optimum $\chi$-binding function remains an interesting open problem.  

There are many more interesting results and problems in the study of $\chi$-boundedness.  For more information, see the recent survey of Scott and Seymour~\cite{SS18}.

In this section, we introduce the concept of local-fractional $\chi$-boundedness.

\begin{definition}
  Let $\mathcal G$ be a class of graphs.
  \begin{itemize}
  \item If there exists a function $g : \mathbb N \rightarrow \mathbb R$ such that for all $G\in \mathcal G$, and for all induced subgraphs $H$ of $G$, the graph $H$ has an $f$-coloring for every demand function $f$ such that for each $v\in V(H)$, $f(v) \leq 1/g(\omega(v))$, then $\mathcal G$ is \textit{local-fractionally $\chi$-bounded};
  \item in this case, the class $\mathcal G$ is \textit{local-fractionally $\chi$-bounded} by $g$ and $g$ is a \textit{local-fractional $\chi$-binding function} for $\mathcal G$.
  \end{itemize}
\end{definition}

The following is the main result of this section.  In this section, a \textit{blowup} of a graph $G$ is a graph obtained from $G$ by replacing vertices with cliques.

\begin{theorem}\label{chi-bounded meta-theorem}
  Let $\mathcal G$ be a class of graphs closed under taking blowups such that $\chi(G) \leq c\cdot \omega(G)$ for all $G \in \mathcal G$, for some $c \in \mathbb R$.
  If $G \in \mathcal G$ and $f$ is a demand function for $G$ such that $f(v) \leq 1/(c\cdot\omega(v))$ for each $v \in V(G)$, then $G$ has an $f$-coloring.
  In particular, if $\mathcal G$ is a $\chi$-bounded class of graphs with $\chi$-binding function $g(n) = c\cdot n$ and $\mathcal G$ is closed under taking blowups, then $\mathcal G$ is local-fractionally $\chi$-bounded by $g$.  
\end{theorem}

Theorem~\ref{chi-bounded meta-theorem} has a number of corollaries.  In particular, note that Theorem~\ref{local frac perfect} follows from Theorem~\ref{chi-bounded meta-theorem}.  
In~\cite[Theorem 2.6]{CRRS16}, Brause et al.\ proved that the weaker version of Conjecture~\ref{local reeds conj} for the independence number holds for perfect graphs.  Theorem~\ref{local frac perfect} generalizes this result to its weighted version and also improves the bound.  It also confirms Conjecture~\ref{local reeds conj} for perfect graphs.  Theorem~\ref{local frac perfect} also implies that if $G$ is a bipartite graph, then the line graph of $G$ has an $f$-coloring if $f(uv) \leq 1/\max\{d(u), d(v)\}$ for each $uv\in E(G)$, which could be considered the local demands version of the fractional relaxation of K\" onig's Line Coloring Theorem.

A graph is \textit{quasiline} if the neighborhood of every vertex is the union of two cliques.  Since quasiline graphs are closed under taking blowups, by combining Theorem~\ref{chi-bounded meta-theorem} with the main result of~\cite{CO07}, we obtain the following.
\begin{corollary}\label{quasiline chi-bounded}
  If $G$ is a quasiline graph with demand function $f$ such that for each $v\in V(G)$, $f(v) \leq 2/(3\omega(v))$, then $G$ has an $f$-coloring.
\end{corollary}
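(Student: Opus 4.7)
The plan is to derive the corollary as an immediate consequence of Theorem~\ref{chi-bounded meta-theorem} applied to the class $\mathcal Q$ of quasi-line graphs with $g(n) = 3n/2$. Two facts about $\mathcal Q$ must be checked: that it is $\chi$-bounded by $g$, and that it is closed under taking blowups.

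For the first, the main result of~\cite{CO07} states that $\chi(H) \leq \lceil 3\omega(H)/2 \rceil$ for every quasi-line graph $H$. Although this is not literally the linear function $g(n) = 3n/2$, the ceiling does not obstruct the blowup argument behind Theorem~\ref{chi-bounded meta-theorem}: if $N$ is a common denominator for $f$ and $G^*$ is the blowup of $G$ obtained by placing $Nf(v)$ copies at each vertex $v$, then for every clique $K$ of $G$ and every $v \in K$ we have $\omega(v) \geq |K|$, so $f(v) \leq 2/(3|K|)$ and $\sum_{v \in K} Nf(v) \leq 2N/3$. Hence $\omega(G^*) \leq \lfloor 2N/3 \rfloor$, and a case analysis modulo $3$ shows $\lceil 3\lfloor 2N/3 \rfloor / 2 \rceil \leq N$, so $G^*$ is $N$-colorable (using that $G^*$ is quasi-line by closure), which gives an $(f,N)$-coloring of $G$ and hence an $f$-coloring via Theorem~\ref{equivalent definitions}.

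For the second, suppose $G$ is quasi-line and $H$ is obtained by replacing each $u \in V(G)$ with a clique $C_u$ and each edge $uv$ with a complete bipartite graph between $C_u$ and $C_v$. Given $v' \in C_v$, partition $N_G(v)$ into two cliques $K_1, K_2$ of $G$. Then $(C_v \setminus \{v'\}) \cup \bigcup_{u \in K_1} C_u$ is a clique of $H$, because $\{v\} \cup K_1$ is a clique of $G$; and $\bigcup_{u \in K_2} C_u$ is a clique of $H$, because $K_2$ is a clique of $G$. Their union equals $N_H(v')$, so $H$ is quasi-line.

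Putting these together, Theorem~\ref{chi-bounded meta-theorem} yields that $\mathcal Q$ is local-fractionally $\chi$-bounded by $g$, and since $f(v) \leq 2/(3\omega(v)) = 1/g(\omega(v))$ for every $v \in V(G)$, it follows that $G$ has an $f$-coloring. The only mildly subtle point is the rounding in the bound from~\cite{CO07}; otherwise the argument is a routine verification of the hypotheses of Theorem~\ref{chi-bounded meta-theorem}.
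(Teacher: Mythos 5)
Your proposal is correct and follows the paper's intended derivation exactly: the paper obtains this corollary simply by combining Theorem~\ref{chi-bounded meta-theorem} with the Chudnovsky--Ovetsky bound $\chi \leq \lceil 3\omega/2\rceil$ for quasi-line graphs, and offers no further detail. Your verification that quasi-line graphs are closed under blowups and your integrality argument absorbing the ceiling (via $\omega(G^*) \leq \lfloor 2N/3\rfloor$ and $\lceil 3\lfloor 2N/3\rfloor/2\rceil \leq N$) are precisely the details the paper leaves implicit, and both are sound.
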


The \textit{claw} is the graph $K_{1, 3}$.  Since claw-free graphs with independence number at least three are closed under taking blowups, by combining Theorem~\ref{chi-bounded meta-theorem} with the main result of~\cite{CS10}, we obtain the following.
\begin{corollary}\label{claw-free chi-bounded}
  If $G$ is a claw-free graph with independence number at least three, and if $f$ is a demand function for $G$ such that for each $v\in V(G)$, $f(v) \leq 1 / (2\omega(v))$, then $G$ has an $f$-coloring.
\end{corollary}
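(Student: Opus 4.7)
The plan is to mirror the blowup-based proof of Theorem~\ref{chi-bounded meta-theorem} directly for $G$, invoking as a black box the theorem of Chudnovsky and Seymour~\cite{CS10} that every claw-free graph $H$ with $\alpha(H) \geq 3$ satisfies $\chi(H) \leq 2\omega(H)$; this is precisely the linear $\chi$-bound with constant $c=2$ that matches the threshold $f(v) \leq 1/(2\omega(v))$ in the statement. We cannot quote Theorem~\ref{chi-bounded meta-theorem} as a black box because the class of claw-free graphs with $\alpha \geq 3$ is not hereditary, but the underlying construction still goes through.

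After noting that we may assume $f(v) > 0$ for every $v$ (vertices with $f(v)=0$ can be colored with the empty set and deleted; in the isolated degenerate case where this deletion breaks $\alpha \geq 3$ one argues directly, as then the residual graph is a claw-free graph with $\alpha \leq 2$ whose clique-constraint $\sum_{v \in K}f(v) \leq 1/2$ puts the demand vector well inside the stable-set polytope), I would fix a common denominator $N$ for $f$ and form the blowup $G'$ obtained from $G$ by replacing each $v$ with a clique $K_v$ of size $Nf(v) \geq 1$. By Theorem~\ref{equivalent definitions} and Remark~\ref{other equivalent definitions remark}, producing an $f$-coloring of $G$ reduces to showing $\chi(G') \leq N$.

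The next step is to verify the three hypotheses of Chudnovsky-Seymour on $G'$. First, $G'$ is claw-free: any three pairwise non-adjacent vertices in $G'$ must come from three distinct and pairwise non-adjacent original vertices in $G$ (otherwise two of them would lie in a common $K_v$ and be adjacent), and a common $G'$-neighbor then projects to a $G$-vertex adjacent to all three in $G$, yielding a claw in $G$ and contradicting the hypothesis. Second, $\alpha(G') = \alpha(G) \geq 3$, because independent sets of $G'$ are in bijection with independent sets of $G$ (at most one vertex per $K_v$). Third, $\omega(G') = N \cdot \max_K \sum_{v\in K} f(v)$ where $K$ ranges over cliques of $G$, and the hypothesis $f(v) \leq 1/(2\omega(v))$ together with $\omega(v) \geq |K|$ for each $v \in K$ gives
\[
\sum_{v\in K} f(v) \leq \sum_{v\in K} \frac{1}{2|K|} = \frac{1}{2},
\]
so $\omega(G') \leq N/2$. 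Chudnovsky-Seymour then delivers $\chi(G') \leq 2\omega(G') \leq N$, and translating a proper $N$-coloring of $G'$ back to $G$ gives the desired $(f,N)$-coloring.

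The main point requiring care is the preservation of $\alpha \geq 3$ under the blowup (and after deleting zero-demand vertices); once this is in hand, everything reduces mechanically to a clique-size computation and a single application of the Chudnovsky-Seymour bound $\chi \leq 2\omega$. The sharpness of the constant $2$ in the denominator $2\omega(v)$ of the hypothesis mirrors exactly the constant $2$ in their $\chi$-bound, so no slack is available and the proof is essentially forced.
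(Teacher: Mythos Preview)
Your core approach matches the paper's: form the blowup $G'$, check that $G'$ remains claw-free with $\alpha(G') = \alpha(G) \geq 3$, bound $\omega(G') \leq N/2$ exactly as in Lemma~\ref{clique size of blowup lemma}, and apply the Chudnovsky--Seymour bound $\chi \leq 2\omega$. The paper states this as a one-line consequence of Theorem~\ref{chi-bounded meta-theorem}; your unpacking of that argument, together with the observation that one only needs the case $H = G$ (so non-hereditariness of the class is irrelevant), is precisely the intended proof.

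There is one genuine slip in your side remark. Your handling of the $f(v)=0$ case is wrong: when deletion drops the independence number below $3$, you assert that the clique constraints $\sum_{v\in K} f(v) \leq 1/2$ alone place the demand vector in the stable-set polytope of the residual graph. But graphs with $\alpha \leq 2$ are exactly complements of triangle-free graphs, and these are \emph{not} $\chi$-bounded---there exist such graphs with $\chi_f/\omega$ arbitrarily large---so clique inequalities at level $1/2$ do not suffice. The fix is much simpler than your detour: without loss of generality replace $f$ by $f'(v) = 1/(2\omega(v)) > 0$ (any $f'$-coloring is an $f$-coloring since $f' \geq f$). Now every blowup clique $K_v$ has size $Nf'(v) \geq 1$, the independence number is preserved exactly, and the degenerate case never arises. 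This is in fact what the proof of Theorem~\ref{chi-bounded meta-theorem} does when it takes $f(v) = 1/g(\omega(v))$.
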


It is natural to ask if there are other natural local-fractionally $\chi$-bounded classes of graphs, and if so, what is their optimal local-fractional $\chi$-binding function.  There are several classes mentioned at the beginning of this section that would be interesting to explore.  In Section~\ref{edge coloring section}, we prove such results about line graphs, and Conjecture~\ref{local fractional TCC} concerns total graphs.  

\subsection{Proof of Theorem~\ref{chi-bounded meta-theorem}}

The remainder of this section is devoted to the proof of Theorem~\ref{chi-bounded meta-theorem}.  First we need the following lemma.
\begin{lemma}\label{clique size of blowup lemma}
  Let $G$ be a graph with demand function $f$, and let $N, c > 0$.  If for each $v\in V(G)$, we have $f(v) = 1/(c\omega(v))$, and if $N$ is a common denominator for $f$, then the graph obtained from $G$ by replacing each vertex with a clique of size $N\cdot f(v)$ contains no clique of size greater than $N / c$.
\end{lemma}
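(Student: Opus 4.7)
The plan is to prove Lemma~\ref{clique size of blowup lemma} directly by analyzing the structure of cliques in the blowup. Let $G'$ denote the blowup of $G$, so each vertex $v\in V(G)$ is replaced by a clique $B_v$ of size $N\cdot f(v) = N/(c\omega(v))$, and for each edge $uv\in E(G)$ there is a complete bipartite join between $B_u$ and $B_v$. Let $K'$ be an arbitrary clique of $G'$, and for each $v\in V(G)$ let $K_v = K'\cap B_v$. Define $S = \{v\in V(G) : K_v\neq\varnothing\}$.

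The first step is to observe that $S$ is a clique in $G$. Indeed, if $u,v\in S$ are distinct, then $K'$ contains vertices from both $B_u$ and $B_v$, and since $K'$ is a clique in $G'$, these vertices are adjacent. By construction of the blowup, the only way vertices of $B_u$ and $B_v$ can be adjacent in $G'$ for $u\neq v$ is if $uv\in E(G)$. Hence $uv\in E(G)$, so $S$ is a clique in $G$, as claimed.

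The second step is the key numerical bound. Fix any $v\in S$. Since $S$ is a clique in $G$ containing $v$, we have $S\subseteq N[v]$, and $S$ itself is a clique of $G[N[v]]$, so $|S| \leq \omega(G[N[v]]) = \omega(v)$. This inequality holds for every $v\in S$, so $\omega(v)\geq |S|$ for each $v\in S$, and therefore
\begin{equation*}
    |K'| \;=\; \sum_{v\in S}|K_v| \;\leq\; \sum_{v\in S}N\cdot f(v) \;=\; \sum_{v\in S}\frac{N}{c\,\omega(v)} \;\leq\; \sum_{v\in S}\frac{N}{c|S|} \;=\; \frac{N}{c}.
\end{equation*}

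Since $K'$ was arbitrary, this gives the desired bound on $\omega(G')$. There is no real obstacle here: the only subtlety is remembering that $\omega(v)$ refers to the clique number of the closed neighborhood $G[N[v]]$, which is precisely what forces $\omega(v)\geq |S|$ for every $v\in S$. Everything else is just bookkeeping about blowups and the fact that $f(v)=1/(c\omega(v))$.
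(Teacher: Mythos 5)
Your proof is correct and follows essentially the same route as the paper's: decompose a clique $K'$ of the blowup according to the cliques $B_v$, observe that the set $S$ of vertices hit is a clique of $G$ so that $\omega(v)\geq|S|$ for each $v\in S$, and sum the bounds $|K_v|\leq N/(c|S|)$. You simply spell out the step $\omega(v)\geq|S|$ that the paper leaves implicit.
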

\begin{proof}
  Let $G'$ be the graph obtained from $G$ by replacing each vertex  with a clique of size $N\cdot f(v)$, and for each $v\in V(G)$, let $K_v$ denote the clique in $G'$ replacing $v$.  Let $K\subseteq V(G')$ be a clique in $G'$, and let $X = \{v\in V(G) : K_v\cap K \neq \varnothing\}$.  Since $K$ is a clique in $G'$, we have that $X$ is a clique in $G$.  Hence, for each $v\in X$, we have that $|K_v| \leq N / (c|X|)$.  Note that $|K| \leq \sum_{v\in X}|K_v|$.  Therefore $|K| \leq N / c$, so $G'$ contains no clique of size greater than $N / c$, as desired.
\end{proof}

Now the proof of Theorem~\ref{chi-bounded meta-theorem} follows easily.
\begin{proof}[Proof of Theorem~\ref{chi-bounded meta-theorem}]
  Let $G \in \mathcal G$, and let $f$ be a demand function for $G$ such that $f(v) = 1 / (c \cdot \omega(v))$ for each $v\in V(H)$.  Let $N$ be a common denominator for $f$, let $G'$ be the blowup of $G$ obtained by replacing each vertex of $G$ with a clique of size $N\cdot f(v)$, and note that $G$ has an $(f, N)$-coloring if and only if $G'$ has chromatic number at most $N$.  Hence, to show that $G$ has an $f$-coloring, by Theorem~\ref{equivalent definitions}, it suffices to show that $G'$ has chromatic number at most $N$.  By Lemma~\ref{clique size of blowup lemma}, $\omega(G') \leq N / c$.  Since $\mathcal G$ is closed under taking blowups, we have $G' \in \mathcal G$, and it follows that $\chi(G') \leq c \cdot \omega(G') \leq N$, as desired.

  Now suppose $\mathcal G$ is a $\chi$-bounded class of graphs with $\chi$-binding function $g(n) = c\cdot n$ that is closed under taking blowups, let $G \in \mathcal G$, and let $H$ be an induced subgraph of $G$.  Every blowup $H'$ of $H$ is an induced subraph of a blowup of $G$ and thus satisfies  $\chi(H') \leq g(\omega(H'))$.  Hence, by applying the previous result with the class of induced subgraphs of a graph in $\mathcal G$ playing the role of $\mathcal G$, the graph $H$ has a $g$-coloring, so $\mathcal G$ is local-fractionally $\chi$-bounded by $g$, as desired.
\end{proof}

\section{Edge-Coloring}\label{edge coloring section}

In this section we consider fractional edge-coloring with local demands.  Vizing's Theorem~\cite{V64} states that every graph $G$ can be edge-colored with at most $\Delta(G) + 1$ colors.  Equivalently, it states that every graph $G$ satisfies $\chi(L(G)) \leq \Delta(G) + 1$, where $L(G)$ is the line graph of $G$.  
Vizing's Theorem can be generalized to multigraphs, as follows.  Every multigraph $G$ can be edge-colored with at most $\Delta(G) + \max_{uv\in E(G)}\edgemult{uv}$ colors, where $\Delta(G)$ is the maximum degree of the underlying simple graph and $\edgemult{uv}$ is the \textit{multiplicity} of the edge $uv$, or the number of edges in $G$ incident to both $u$ and $v$.  In this section, if $G$ is a multigraph and $v\in V(G)$, we use $|N(v)|$ to denote the number of neighbors of $v$ and let $d(v) = \sum_{u\in N(v)} \edgemult{uv}$.  In this section, we prove Theorem~\ref{fractional generalized vizings}, which is the local demands version of this generalization of Vizing's Theorem.

We also prove the following local demands version of a theorem of Shannon~\cite{S49}.

\begin{theorem}[Local Fractional Shannon's]\label{fractional shannons}
  If $G$ is a multigraph and $f$ is a demand function for $L(G)$ such that each $e\in V(L(G))$ with $e = uv\in E(G)$ satisfies $f(e) \leq 2 / (3\max\{d(u), d(v)\})$, then $L(G)$ has an $f$-coloring.
\end{theorem}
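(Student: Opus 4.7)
The plan is to deduce Theorem~\ref{fractional shannons} directly from the matching polytope characterisation of fractional edge colorings. By the equivalence of items (a) and (c) in Theorem~\ref{equivalent definitions}, it suffices to show that the vector $(f(e))_{e\in E(G)}$ lies in the stable set polytope of $L(G)$. Since independent sets in $L(G)$ are exactly matchings of $G$, this polytope coincides with the matching polytope of $G$. By Edmonds' Matching Polytope Theorem (cited in the paper just after Proposition~\ref{odd cycle colorings}), membership reduces to nonnegativity of $f$, the vertex inequalities $\sum_{e\ni v} f(e)\le 1$ for every $v\in V(G)$, and the odd-set inequalities $\sum_{e\in E(G[U])} f(e)\le (|U|-1)/2$ for every $U\subseteq V(G)$ of odd size at least $3$.

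The vertex inequalities are essentially free: for each $v\in V(G)$, each edge $e=uv\ni v$ satisfies $f(e)\le \tfrac{2}{3\max\{d(u),d(v)\}}\le \tfrac{2}{3 d(v)}$, and summing over the $d(v)$ edges at $v$ gives $\sum_{e\ni v} f(e)\le \tfrac{2}{3}<1$. The substantive step is the odd-set inequality. The clean way to handle the asymmetric $\max$ in the denominator is the elementary inequality $\tfrac{2}{\max\{a,b\}}\le \tfrac{1}{a}+\tfrac{1}{b}$ valid for positive $a,b$, which symmetrises the per-edge bound to
\[
f(e)\ \le\ \frac{1}{3 d(u)}+\frac{1}{3 d(v)} \qquad \text{for each } e=uv\in E(G).
\]
Summing over $E(G[U])$ and grouping by endpoint then gives
\[
\sum_{e\in E(G[U])} f(e)\ \le\ \sum_{v\in U} \frac{d_{G[U]}(v)}{3\, d_G(v)}\ \le\ \frac{|U|}{3}\ \le\ \frac{|U|-1}{2},
\]
where the final inequality uses $|U|\ge 3$.

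I do not expect a genuine obstacle here: the demands $2/(3\max\{d(u),d(v)\})$ are tailor-made to leave slack $1/3$ at each vertex, and this is exactly enough for the $(|U|-1)/2$ odd-set bound once $|U|\ge 3$. The only subtlety worth flagging is that the natural per-edge bound is asymmetric in the two endpoints, so one must symmetrise it before summing over $G[U]$; this is exactly the role played by the inequality $\tfrac{2}{\max\{a,b\}}\le \tfrac{1}{a}+\tfrac{1}{b}$. In contrast with the Vizing-type bound $1/(\max\{d(u),d(v)\}+\edgemult{uv})$ of Theorem~\ref{fractional generalized vizings}, where the matching polytope description only yields $|U|/2$ and so fails to give $(|U|-1)/2$, the $2/3$ factor in the Shannon-type bound provides the extra slack that makes this elementary approach succeed.
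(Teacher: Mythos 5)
Your proposal is correct and follows essentially the same route as the paper: reduce to Edmonds' Matching Polytope Theorem, observe that the vertex inequalities hold with slack ($\sum_{e\ni v}f(e)\le 2/3$), and bound the odd-set sums by $|U|/3\le\lfloor|U|/2\rfloor$ via double counting over endpoints, which is exactly your symmetrisation $\tfrac{2}{\max\{a,b\}}\le\tfrac1a+\tfrac1b$. The only cosmetic difference is that the paper first passes to the underlying simple graph (aggregating parallel edges into a single demand) because its statement of Edmonds' theorem is for simple graphs, whereas you apply the matching polytope description to the multigraph directly.
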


Theorem~\ref{fractional shannons} follows from Corollary~\ref{quasiline chi-bounded}; however, since Corollary~\ref{quasiline chi-bounded} relies on the results of~\cite{CO07}, we provide a more direct proof of Theorem~\ref{fractional shannons} in this section.

In order to prove each of these theorems, we will need Edmonds' Matching Polytope Theorem~\cite{E65}, which, in light of Theorem~\ref{equivalent definitions}, characterizes all of the fractional colorings of a line graph.
\begin{theorem}[Edmonds' Matching Polytope Theorem \cite{E65}]\label{edmonds matching polytope thm}
  If $G$ is a simple graph and $f$ a demand function for $L(G)$, then $L(G)$ has an $f$-coloring if and only if for all $v\in V(G)$,
  \begin{equation}
    \label{mp vertex constraint}
    \sum_{u\in N(v)}f(uv) \leq 1,
  \end{equation}
  and for every $S\subseteq V(G)$,
  \begin{equation}
    \label{mp subgraph constraint}
    \sum_{e\in E(G[S])} f(e) \leq \lfloor |S| / 2\rfloor.
  \end{equation}
\end{theorem}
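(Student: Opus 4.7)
The statement is Edmonds' classical Matching Polytope Theorem~\cite{E65}, so the plan is to sketch the standard proof, splitting it into a brief necessity argument and a more involved sufficiency argument via LP duality and uncrossing. Necessity is immediate: by Theorem~\ref{equivalent definitions}, $L(G)$ admits an $f$-coloring iff $(f(e))_{e \in E(G)}$ lies in the stable set polytope of $L(G)$, which coincides with the matching polytope of $G$. The indicator vector of any matching $M$ trivially satisfies \eqref{mp vertex constraint} (each vertex of $G$ is in at most one edge of $M$) and \eqref{mp subgraph constraint} (a matching inside $G[S]$ uses at most $\lfloor |S|/2\rfloor$ edges, since each consumes two vertices of $S$), and both inequalities are preserved under convex combinations.

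For sufficiency, let $P \subseteq \mathbb{R}^{E(G)}$ be the polytope carved out by $x \geq 0$, \eqref{mp vertex constraint}, and \eqref{mp subgraph constraint} restricted to odd $S$ (the even-$|S|$ inequalities being redundant, as they follow from summing vertex inequalities over $S$). The target is to show that every extreme point of $P$ is the indicator vector of a matching, which would give $P = \mathrm{conv}\{\chi_M : M \text{ a matching of } G\}$. To achieve this, I would prove the Cunningham--Marsh Theorem: for every integer weight function $w : E(G) \to \mathbb{Z}_{\geq 0}$, the LP $\max\{w^{\top} x : x \in P\}$ admits an optimal dual solution $(y^*, z^*)$ with $y^*$ integer-valued and $\{S : z_S^* > 0\}$ a laminar family of odd subsets of $V(G)$. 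The dual LP has variables $y_v \geq 0$ and $z_S \geq 0$ with objective $\sum_v y_v + \sum_{S \text{ odd}} \lfloor |S|/2\rfloor z_S$ and edge constraints $y_u + y_v + \sum_{S \ni u,v} z_S \geq w_{uv}$. I would select an optimal dual minimizing the potential $\sum_S z_S |S|^2$; if two sets $S_1, S_2$ in the support were to cross, redistributing weight from them to $S_1 \cap S_2$ and $S_1 \cup S_2$ would preserve feasibility and objective while strictly lowering the potential, a contradiction forcing laminarity. A parity-careful rounding then produces integer $z^*$, and LP duality combined with integrality of the resulting dual forces the primal optimum to equal the maximum $w$-weight matching of $G$; since this holds for every non-negative integer $w$, every vertex of $P$ is a matching indicator, giving the desired description.

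The main obstacle is the parity book-keeping in the uncrossing and rounding steps. Two odd sets can intersect in an even set, in which case the direct replacement $S_1, S_2 \mapsto S_1 \cap S_2, S_1 \cup S_2$ leaves the odd-set support; one must instead transfer weight using the components of $S_1 \triangle S_2$ or adjust by a single vertex to keep the support odd. The subsequent rounding step requires analogous care in tracking how the $\lfloor |S|/2\rfloor$ coefficients interact with the parities of sets in the support, so that the edge constraints $y_u + y_v + \sum_{S \ni u,v} z_S \geq w_{uv}$ remain satisfied throughout. Once this parity analysis is handled and Cunningham--Marsh is established, the deduction of the matching-polytope description via LP duality is routine.
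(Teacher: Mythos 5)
The paper does not prove this statement at all: it is Edmonds' Matching Polytope Theorem, imported verbatim from the cited reference \cite{E65} and used as a black box in Section~\ref{edge coloring section}, so there is no in-paper argument to compare against. Your outline follows a legitimate and well-known route --- not Edmonds' original blossom-algorithm proof, but the LP-duality/uncrossing proof via the Cunningham--Marsh theorem (as in Schrijver's treatment). The necessity direction is complete and correct: independent sets of $L(G)$ are matchings of $G$, so by Theorem~\ref{equivalent definitions}(c) an $f$-coloring exists iff $(f(e))_e$ lies in the matching polytope, and both families of inequalities are valid for matching indicators and closed under convex combination; your observation that the even-$|S|$ constraints follow from summing the vertex constraints over $S$ is also right. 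The sufficiency direction, however, is a sketch rather than a proof: the two steps you yourself flag as the main obstacles --- keeping the support odd when $S_1\cap S_2$ is even during uncrossing, and the parity bookkeeping in the integral rounding of the dual --- are exactly where the real work of the theorem lives, and they are deferred rather than carried out. (For instance, in the even-intersection case one typically does not re-route weight to $S_1\cap S_2$ and $S_1\cup S_2$ at all, but instead derives a contradiction with dual optimality from the identity $\lfloor|S_1|/2\rfloor+\lfloor|S_2|/2\rfloor=\lfloor|S_1\cap S_2|/2\rfloor+\lfloor|S_1\cup S_2|/2\rfloor-1$, so the case analysis is genuinely different from the odd-intersection case rather than a minor adjustment.) As an account of why the theorem is true and how one would prove it, your proposal is sound; as a self-contained proof it is incomplete at precisely the points you identify, and for the purposes of this paper the honest course is what the authors did --- cite the result.
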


In order to show that~\eqref{mp subgraph constraint} holds, we need the following lemma.
\begin{lemma}\label{subgraph constraint lemma}
  If $G$ is a simple graph, then
  \begin{equation*}
    \sum_{v\in V(G)}\frac{d(v)}{d(v) + 1} \leq |V(G)| - 1.
  \end{equation*}
\end{lemma}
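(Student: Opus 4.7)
The plan is to rewrite the inequality in the equivalent form $\sum_{v\in V(G)} 1/(d(v)+1) \geq 1$, using the identity $d(v)/(d(v)+1) = 1 - 1/(d(v)+1)$. So the task reduces to establishing a lower bound of $1$ on the Caro--Wei sum of $G$. The key ingredient is the elementary observation that, since $G$ is simple, every vertex satisfies $d(v) \leq |V(G)| - 1$, so each summand satisfies $1/(d(v)+1) \geq 1/|V(G)|$; summing over all $|V(G)|$ vertices then gives exactly $\sum_v 1/(d(v)+1) \geq 1$.

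Equivalently, one can argue directly on the original expression: the function $t \mapsto t/(t+1)$ is monotone increasing on $[0,\infty)$, so $d(v)/(d(v)+1) \leq \Delta(G)/(\Delta(G)+1) = 1 - 1/(\Delta(G)+1)$. Summing over all vertices yields $\sum_v d(v)/(d(v)+1) \leq |V(G)| - |V(G)|/(\Delta(G)+1)$, and combining this with $\Delta(G) \leq |V(G)|-1$ gives the claimed bound, with equality precisely when $G$ is a complete graph.

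There is really no obstacle here; the entire argument rests on the bound $\Delta(G) \leq |V(G)| - 1$ for simple graphs, which is exactly the feature that distinguishes the simple case of Theorem~\ref{fractional vizings} from the multigraph case of Theorem~\ref{fractional generalized vizings}. The only caveat is the implicit assumption that $|V(G)| \geq 1$ (otherwise the right-hand side is $-1$ while the sum is $0$); this is harmless in the intended application to vertex subsets $S$ appearing in Edmonds' constraint~\eqref{mp subgraph constraint}, since the $|S|=0$ case of that constraint is trivial on its own.
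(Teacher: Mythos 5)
Your proof is correct, and it takes a genuinely different (and shorter) route than the paper. The paper proves the lemma by induction on $|V(G)|$: it deletes a vertex $v$ of minimum degree, applies the inductive hypothesis to $G - v$, and bounds the resulting increase of the sum by
$\frac{d(v)}{d(v)+1} + \sum_{u\in N(v)}\frac{1}{d(u)(d(u)+1)} \leq \frac{d(v)}{d(v)+1} + \frac{1}{d(v)+1} = 1$,
using the minimality of $d(v)$ to control each neighbor's contribution. Your argument bypasses the induction entirely: after rewriting the claim as $\sum_{v} 1/(d(v)+1) \geq 1$, the single observation $d(v) \leq |V(G)| - 1$ for simple graphs does all the work, since each summand is then at least $1/|V(G)|$. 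Both proofs are valid; yours is more elementary and in fact yields the slightly sharper bound $|V(G)| - |V(G)|/(\Delta(G)+1)$ with equality exactly for complete graphs, while the paper's inductive argument localizes the bookkeeping to minimum-degree vertices (closer in spirit to the degeneracy-style reductions used elsewhere in the paper, but buying nothing extra for this statement). Your caveat about $|V(G)| = 0$ is also apt: the paper's induction starts at $|V(G)| = 1$, and the empty set is harmless in the application to Edmonds' subgraph constraint, where both sides vanish.
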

\begin{proof}  
  We use induction on $|V(G)|$.  If $|V(G)| = 1$, then there are no edges and the lemma follows.  Therefore we may assume $|V(G)| > 1$.

  Let $v\in V(G)$ have minimum degree.  By induction,
  \begin{equation*}
    \sum_{u\in V(G - v)}\frac{d_{G - v}(u)}{d_{G - v}(u) + 1} \leq |V(G)| - 2.
  \end{equation*}
  Therefore it suffices to show that
  \begin{equation}\label{edge coloring inductive step}
    \frac{d_G(v)}{d_G(v) + 1} + \sum_{u\in N(v)} \frac{d_G(u)}{d_G(u) + 1} - \frac{d_{G - v}(u)}{d_{G-v}(u) + 1} \leq 1.
  \end{equation}
  If $u\in N(v)$, then $d_{G - v}(u) = d_G(u) - 1$.  Hence,
  \begin{equation*}
    \frac{d_G(u)}{d_G(u) + 1} - \frac{d_{G - v}(u)}{d_{G-v}(u) + 1} = \frac{1}{d_G(u)(d_G(u) + 1)} \leq \frac{1}{d_G(v)(d_G(v) + 1)}.
  \end{equation*}
  Therefore
  \begin{equation*}
    \sum_{u\in N(v)}\frac{d_G(u)}{d_G(u) + 1} - \frac{d_{G - v}(u)}{d_{G-v}(u) + 1}  \leq \frac{1}{d_G(v) + 1},
  \end{equation*}
  and \eqref{edge coloring inductive step} follows, as required.
\end{proof}

Before proving Theorem~\ref{fractional generalized vizings}, we show that the proof essentially reduces to the case of simple graphs using the following lemma.
\begin{lemma}\label{multigraph to simple lemma}
  If $G$ is a multigraph and $v\in V(G)$, then
  \begin{equation*}
    \sum_{u\in N(v)}\frac{\edgemult{uv}}{d(v) + \edgemult{uv}} \leq \frac{|N(v)|}{1 + |N(v)|}.
  \end{equation*}
\end{lemma}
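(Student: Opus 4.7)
The plan is to reduce the inequality to a single-variable Jensen estimate. Write $n = |N(v)|$, $d = d(v)$, and for each $u \in N(v)$ let $m_u = \edgemult{uv}$, so that $m_u > 0$ and $\sum_{u \in N(v)} m_u = d$. Dividing numerator and denominator of each term by $d$ and setting $x_u = m_u/d$, the inequality becomes
\begin{equation*}
    \sum_{u \in N(v)} \frac{x_u}{1 + x_u} \leq \frac{n}{n+1},
\end{equation*}
subject to the constraints $x_u > 0$ and $\sum_u x_u = 1$.

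Now I would invoke concavity of $g(x) = x/(1+x) = 1 - 1/(1+x)$ on $[0,\infty)$, which is immediate from $g''(x) = -2/(1+x)^3 < 0$. By Jensen's inequality,
\begin{equation*}
    \frac{1}{n} \sum_{u \in N(v)} g(x_u) \;\leq\; g\!\left(\frac{1}{n}\sum_{u \in N(v)} x_u\right) \;=\; g(1/n) \;=\; \frac{1/n}{1 + 1/n} \;=\; \frac{1}{n+1}.
\end{equation*}
Multiplying through by $n$ gives the desired bound.

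No step is a real obstacle: once the ratio $m_u/(d + m_u)$ is recognized as $g(m_u/d)$ with $g$ concave, and once one notices that the multiplicities sum to exactly $d$, Jensen's inequality finishes the argument with equality precisely when all multiplicities $m_u$ are equal (which matches the case that makes Theorem~\ref{fractional generalized vizings} tight among multigraphs with a given $|N(v)|$).
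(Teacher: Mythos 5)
Your proof is correct and uses the same key idea as the paper: concavity of $x \mapsto x/(c+x)$ together with Jensen's inequality applied over the $|N(v)|$ neighbors (the paper applies Jensen directly to $x/(d(v)+x)$ rather than normalizing by $d(v)$ first, but this is only a cosmetic difference).
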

\begin{proof}
  Note that by definition, $d(v) = \sum_{u\in N(v)}\edgemult{uv}$.  Note also that $\frac{x}{d(v) + x}$ is concave as a function of $x$, so by Jensen's Inequality,
  \begin{equation*}
    \frac{\sum_{u\in N(v)}\edgemult{uv}/(d(v) + \edgemult{uv})}{|N(v)|} \leq \frac{d(v)/|N(v)|}{d(v) + d(v)/|N(v)|} = \frac{1/|N(v)|}{1 + 1/|N(v)|}. 
  \end{equation*}
  Rearranging terms,
  \begin{equation*}
    \sum_{u\in N(v)}\edgemult{uv}/(d(v) + \edgemult{uv}) \leq \frac{|N(v)|}{1 + |N(v)|},
  \end{equation*}
  as desired.
\end{proof}

We can now prove Theorem~\ref{fractional generalized vizings}.
\begin{proof}[Proof of Theorem~\ref{fractional generalized vizings}]
  Let $G'$ be the underlying simple graph of $G$, and let $f'$ be a demand function for $L(G')$ such that for each $e\in V(L(G'))$ where $e = uv\in E(G')$, we have $f'(e) = \edgemult{uv}/(\max\{d_G(u), d_G(v)\} + \edgemult{uv})$.  It suffices to show that $L(G')$ has an $f'$-coloring, because then $L(G)$ has an $f$-coloring, as desired.  
  By Theorem~\ref{edmonds matching polytope thm}, it suffices to show that~\eqref{mp vertex constraint} and \eqref{mp subgraph constraint} hold for $G'$ and $f'$.

  For each $v\in V(G)$ and $u\in N(v)$, we have $f'(uv) \leq \edgemult{uv}/(d_G(v) + 1)$.  Hence, for each $v\in V(G)$, we have $\sum_{u\in N(v)}f'(uv) \leq d_G(v)/(d_G(v) + 1) \leq 1$, so \eqref{mp vertex constraint} holds, as desired.

  Let $S\subseteq V(G)$, and note that
  \begin{equation*}
    2\sum_{e\in E(G'[S])}f'(e) = \sum_{v\in S}\sum_{u\in N(v)\cap S}f'(uv) \leq \sum_{v\in S}\sum_{u \in N(v)\cap S}\frac{\edgemult{uv}}{d_{G[S]}(v) + 1}.
  \end{equation*}
  By Lemma~\ref{multigraph to simple lemma} applied to each $v\in S$,
  \begin{equation*}
    \sum_{v\in S}\sum_{u\in N(v)\cap S}\frac{\edgemult{uv}}{d_{G[S]}(v) + 1} \leq \sum_{v\in S}\frac{|N(v)\cap S|}{1 + |N(v)\cap S|} = \sum_{v\in S} \frac{d_{G'[S]}(v)}{1 + d_{G'[S]}(v)}.
  \end{equation*}
  By Lemma~\ref{subgraph constraint lemma}, the right side of the previous inequality is at most $|S| - 1$, so the previous two inequalities imply that
  \begin{equation*}
    \sum_{e\in E(G[S])}f(e) \leq \frac{|S| - 1}{2} \leq \left\lfloor \frac{|S|}{2}\right\rfloor,
  \end{equation*}
  as required.
\end{proof}

We conclude this section with the proof of Theorem~\ref{fractional shannons}.
\begin{proof}[Proof of Theorem~\ref{fractional shannons}]
  Let $G'$ be the underlying simple graph of $G$, and let $f'$ be a demand function for $L(G')$ such that for each $e\in V(L(G'))$ where $e = uv\in E(G')$, we have $f'(e) = 2\edgemult{uv}/(3\max\{d_G(u), d_G(v)\})$.  It suffices to show that $L(G')$ has an $f'$-coloring, because then $L(G)$ has an $f$-coloring, as desired.  
  By Theorem~\ref{edmonds matching polytope thm}, it suffices to show that~\eqref{mp vertex constraint} and \eqref{mp subgraph constraint} hold for $G'$ and $f'$.

  For each $v\in V(G)$ and $u\in N(v)$, we have $f'(uv) \leq 2\edgemult{uv}/(3d_G(v))$.  Hence, for each $v\in V(G)$, we have $\sum_{u\in N(v)}f'(uv) \leq 2d_G(v)/(3d_G(v)) = 2/3 \leq 1$, so \eqref{mp vertex constraint} holds, as desired.  Moreover, for any $S\subseteq V(G)$,
  \begin{equation*}
    2\sum_{e\in E(G'[S])}f'(e) \leq \sum_{v\in S}\sum_{u\in N(v)}f'(uv) \leq \sum_{v\in S}2/3 = 2|S|/3.
  \end{equation*}
  Therefore $\sum_{e\in E(G'[S])}f'(e) \leq |S|/3$, so if $|S| \geq 2$, then $\sum_{e\in E(G'[S])}f'(e) \leq \lfloor |S| / 2 \rfloor$.  Hence, \eqref{mp subgraph constraint} holds, as required.
\end{proof}

\section*{Acknowledgements}

We thank the referee for their careful reading of this paper and their suggestions.


\end{document}